\newtheorem{theorem}{\color{black}\indent Theorem}[section]
\newtheorem{lemma}{\color{black}\indent Lemma}[section]
\newtheorem{remark}{\color{black}\indent Remark}
\newtheorem{cor}{\color{black}\indent Corollary}[section]
\newtheorem{pro}{\color{black}\indent Proposition}[section]
\begin{document}

\title{The multi-scale KAM persistence without a scaling order for Hamiltonian systems}
\author{ Weichao Qian$^a$\thanks{E-mail address: qian\_wc@163.com}, ~Yong Li$^{a,b}$\thanks{E-mail address: liyong@jlu.edu.cn},~ Xue Yang$^{a,b}$\thanks{E-mail address: xueyang@jlu.edu.cn}~\footnote{Corresponding author},
\\
{$^a$College of Mathematics, Jilin University,}
\\
{ Changchun 130012, P. R. China.}
\\
{$^b$School of Mathematics and statistics and}
\\
{Center for Mathematics and Interdisciplinary Sciences, }
\\
{Northeast Normal University, Changchun 130024, P. R. China}
}
\date{}

\maketitle

\begin{abstract}
The persistence of invariant tori in multi-scale Hamiltonian systems is intrinsically linked to the stability of the N-body problem. However, the existing non-degeneracy conditions in disordered scenarios have been formulated too generally, making them difficult to apply directly to celestial mechanics. In this work, we present a readily verifiable non-degeneracy condition for the persistence of invariant tori in disordered multi-scale Hamiltonian systems.

\end{abstract}
\section{Background}
In the N-body problem, the masses of celestial bodies and the distances between them are crucial for characterizing the system. When the distances between celestial bodies are dynamically evolving, within the framework of action-angle variables, the Hamiltonian system can often be expressed as follows:
\begin{eqnarray}\label{EQ1}
H(I, \theta) = \varepsilon_0 H_0 (I) + \cdots + \varepsilon_\mathfrak{m} H_\mathfrak{m}(I) + \varepsilon P(I, \theta),
\end{eqnarray}
where $I\in D$, $D$ denotes a bounded closed region in $\mathds{R}^n$, $\theta\in \mathds{T}^n$, $0<\varepsilon\ll \min\limits_{1\leq i\leq \mathfrak{m}}\{\varepsilon_i\}\ll  1$, the parameters $\{\varepsilon_i\}$ need not follow a well-defined ordering. In such systems, a natural question arises: Does KAM stability persist under sufficiently small $\varepsilon$?  While we addressed this issue in \cite{Qian}, the non-degeneracy conditions presented there are in general and abstract form. This motivates the pursuit of a more flexible condition. In this paper, we will provide such a more convenient non-degenerate condition for the persistence of multi-scale invariant tori in Hamiltonian system (\ref{EQ1}).

Denote $\tilde{H}(I) = \varepsilon_0 H_0 (I) + \cdots + \varepsilon_\mathfrak{m} H_\mathfrak{m}(I)$ and $\omega = \partial_I \tilde{H} $. The equations of motion are given by
$\left\{
 \begin{array}{ll}
\dot{I}= \frac{\partial \tilde{H}}{\partial \theta} = 0, \\
\dot{\theta}= -\frac{\partial \tilde{H}}{\partial I} = \omega,
 \end{array}
 \right.$
which indicates that for a given initial condition $\big(I(0), \theta(0)\big) = (I_0, \theta_0)$ the solution of the equations of motion is
\begin{eqnarray*}
\left\{
                                                                      \begin{array}{ll}
                                                                        I(t)= I_0, \\
                                                                        \theta(t) = \theta_0 + \omega(I_0) t.
                                                                      \end{array}
                                                                    \right.
\end{eqnarray*}
If the frequency exhibits rational independence ($\langle k, \omega(I_0)\rangle \neq 0$ for any $k\in \mathds{Z}^n\setminus \{0\}$ ), the phase trajectories are everywhere dense on tori $ \mathds{T}^n\times \{I_0\}$, meaning the phase space is foliated by invariant torus $\mathds{T}^n \times \{I_0\}$. In contrast to the classical case, our study involves multi-scale frequencies, indicating that the flow speeds in different directions may operate on different scales.

The problem of the influence of small Hamiltonian perturbations on an integrable Hamiltonian system was called by Poincar\'{e} the fundamental problem of dynamics (\cite{Arnold1}). At present Poincar\'{e}'s `fundamental problem of the dynamics' continues to occupy one of the most important places in the theory of dynamical systems (\cite{Treschev}). Naturally, a key question arises: do the multi-scale invariant tori of the unperturbed system survive under small perturbations? In present paper, we will touch this problem.

The Hamiltonian system (\ref{EQ1}) frequently arises in various celestial mechanics problems, such as the spatial comet case, co-orbital motion in the three-body problem, and the spatial lunar problem \cite{Arnold4,Cors,Meyer3,Meyer4,Meyer5,Palacian}. The stability of these celestial mechanics problems is closely linked to the persistence of invariant tori in multi-scale Hamiltonian systems. This persistence originates from Arnold, who demonstrated the persistence of invariant tori for two-scale system $h_0 + \varepsilon h_1$ under the Kolmogorov and iso-energetic non-degenerate condition (\cite{Arnold4}). However, in many celestial mechanics scenarios, such as the spatial comet case and the spatial lunar problem, two scales are insufficient to eliminate degeneracy as pointed out in \cite{Cors,Meyer3,Meyer4,Meyer5,Palacian}.

Motivated by applications in a broader class of perturbed Kepler problems, \cite{han} proved the existence of full-dimensional invariant tori when there is an order relationship among the scales. However, there have been few studies on disordered multi-scale Hamiltonian systems. To address the dynamic stability of disordered multi-scale Hamiltonian systems, \cite{Qian} developed a multi-scale KAM iterative scheme and established a KAM theorem for such systems under general scenarios, thereby demonstrating `stability with respect to the measure of initial data'. Nevertheless, the hypothesis conditions in these results were formulated in general and abstract form. To facilitate the implementation of these studies in practical applications, it is necessary to propose more readily verifiable non-degeneracy conditions that better accommodate astronomical applications. For more information on the persistence of multi-scale invariant tori, refer to \cite{Fejoz,Xuemei,Qian1,Qian2,Qian3,Xu2,Xu3,Zhao}.

In this paper, we revisit the multi-scale Hamiltonian $(\ref{EQ1})$ and provide a more verifiable non-degenerate condition for the persistence of multi-scale invariant tori. To present our main results, we will first outline some assumptions.

\begin{itemize}
  \item [$\bf(R)$] There exists $N>0$ such that
  $$rank \{ \partial_I^\alpha \omega(I_0), 0\leq|\alpha| \leq n - 1\} = n, \forall I_0 \in D.$$
\end{itemize}
\begin{itemize}
\item[\bf{(K)}] Assume that there is an $n_1 \times n_1$ submatrix $\mathcal{N}$ of $(\partial_I^2 \tilde{H})^T$ such that
\begin{eqnarray*}
\mathcal{N}^T \mathcal{N} \geq c \tilde{\varepsilon}^2 I_{n_1\times n_1},
\end{eqnarray*}
where $c>0$ is a constant and $\tilde{\varepsilon} = \min\limits_{0\leq i\leq \mathfrak{m}} \{\varepsilon_i\}$.
\end{itemize}
\begin{itemize}
\item[\bf{(I)}] Assume that there is an $(n_1+1) \times (n_1+1)$ submatrix $\tilde{\mathcal{N}}$ of $\left(
  \begin{array}{cc}
    \partial_I^2 \tilde{H} & \omega^T \\
     \omega & 0 \\
  \end{array}
\right)$ such that
\begin{eqnarray*}
\tilde{\mathcal{N}}^T \tilde{\mathcal{N}} \geq c \tilde{\varepsilon}^2 I_{(n_1+1)\times (n_1+1)},
\end{eqnarray*}
where $c$ is a positive constant and $\tilde{\varepsilon} = \min\limits_{0\leq i\leq \mathfrak{m}} \{\varepsilon_i\}$.
\end{itemize}

We have the following theorem.

\begin{theorem}\label{MainTheorem}

Denote $\tilde{\varepsilon} = \min\limits_{0\leq i\leq \mathfrak{m}} \{\varepsilon_i\}$ and $\varepsilon = \epsilon\tilde{\varepsilon}$. Consider Hamiltonian $(\ref{EQ1})$ on $D\times \mathds{T}^n$.
\begin{enumerate}
  \item [\bf{1).}]
            Assume that condition $\bf{(R)}$ holds. Then, there exist $\varepsilon_0>0$ and a family of Cantor sets $D_\varepsilon \subset D$, $0< \varepsilon \leq \varepsilon_0$ , such that $|D\setminus D_\varepsilon| = O(\epsilon^{\frac{1}{N}})$. Each $I_0\in D_\varepsilon$ corresponds to a real analytic, invariant, quasi-periodic $n-$torus $T_{I_0}^\varepsilon=  \mathds{T}^n \times \{ I_0\}$ of Hamiltonian (\ref{EQ1}), which is a slight deformation of the unperturbed $n-$torus $T_{I_0}$, where the frequency is $\omega(I_0)$;
  \item [\bf{2).}] Assume that $\bf{(R)}$ and $\bf{(K)}$ hold. Then, there exist $\varepsilon_0>0$ and a family of Cantor sets  $D_\varepsilon \subset D$ for $0< \varepsilon \leq \varepsilon_0$, such that $|D\setminus D_\varepsilon| = O(\epsilon^{\frac{1}{N}})$. Each $I_0\in D_\varepsilon$ corresponds to a real analytic, invariant, quasi-periodic $n-$torus $T_{I_0}^\varepsilon$, which has $n_1$ components of frequency equal to $\omega(I_0)$;
  \item [\bf{3).}] Let $\Sigma = \{I: \tilde{H}(I) = c\}$ be a given energy surface. Assume that $\bf{(R)}$, $\bf{(K)}$ and $\bf{(I)}$ hold on $\Sigma$. Then, there exist $\varepsilon_0>0$ and a family of Cantor sets $\Sigma_\varepsilon \subset \Sigma$ for $0< \varepsilon \leq \varepsilon_0$, such that $|\Sigma\setminus \Sigma_\varepsilon| = O(\epsilon^{\frac{1}{N}})$. Each $I_0 \in \Sigma_\varepsilon$ corresponds to a real analytic, invariant, quasi-periodic $n-$torus, on which there are $n_1$ components of frequency $\omega_\varepsilon$ that satisfy $\omega_\varepsilon = t\omega(I_0)$, where $t \rightarrow1$ as $\varepsilon\rightarrow 0$.
\end{enumerate}

\end{theorem}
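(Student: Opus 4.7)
The plan is to reduce each of the three claims to the abstract multi-scale KAM theorem of \cite{Qian} by showing that the explicit, verifiable hypotheses \textbf{(R)}, \textbf{(K)}, \textbf{(I)} imply the general non-degeneracy conditions used there, and then to extract the measure estimate from a Rüssmann-type argument tailored to the multi-scale setting. The iterative machinery is already in place in \cite{Qian}, so the real work is (a) quantifying the Diophantine set produced by \textbf{(R)}, and (b) using \textbf{(K)} and \textbf{(I)} at each iterative step to preserve the selected frequency components in the limit.

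For part 1, I would read \textbf{(R)} as Rüssmann non-degeneracy for the frequency map on $D$. A standard finite-order-of-vanishing argument then forces $I\mapsto\langle k,\omega(I)\rangle$ to have some derivative of order $\le n-1$ uniformly bounded below in a direction depending on $k/|k|$, and the Rüssmann--Xu--You type lemma yields
\begin{equation*}
\bigl|\{I\in D:|\langle k,\omega(I)\rangle|<\gamma/|k|^\tau\}\bigr|\le C\,(\gamma/|k|^\tau)^{1/N},
\end{equation*}
with the exponent $N$ matching the one in the theorem. Summing over $k\in\mathds{Z}^n\setminus\{0\}$ with $\tau$ large enough produces a Diophantine set $D_\gamma$ of measure $|D\setminus D_\gamma|=O(\gamma^{1/N})$; choosing $\gamma$ as an appropriate power of $\epsilon$ (matched to the KAM convergence threshold) gives $|D\setminus D_\varepsilon|=O(\epsilon^{1/N})$. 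Feeding $D_\varepsilon$ into the multi-scale iterative scheme of \cite{Qian} then produces the analytic invariant tori $T^\varepsilon_{I_0}$ as small deformations of $T_{I_0}$ with frequency $\omega(I_0)$.

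Parts 2 and 3 add a frequency-preservation step inside the iteration. For part 2, condition \textbf{(K)} supplies an $n_1\times n_1$ block $\mathcal{N}$ of $(\partial_I^2\tilde H)^T$ whose singular values are uniformly at least $\sqrt{c}\,\tilde\varepsilon$. At each KAM step I would adjust $n_1$ selected components of $I$ via the implicit function theorem applied to $\mathcal{N}$; the spectral lower bound $\mathcal{N}^T\mathcal{N}\ge c\tilde\varepsilon^2 I$ makes this translation well defined and contracting, so the corresponding $n_1$ components of the frequency stay equal to $\omega(I_0)$ in the limit. For part 3, the bordered matrix $\tilde{\mathcal{N}}$ in \textbf{(I)} is precisely the iso-energetic non-degeneracy matrix needed to preserve simultaneously the constraint $\tilde H=c$ and $n_1$ chosen frequency ratios. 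I would restrict the iteration to $\Sigma$, introduce a scalar rescaling $\omega_\varepsilon=t\omega(I_0)$ with $t$ determined by the energy constraint, and use the invertibility from \textbf{(I)} to solve for $t$ at each step and to show $t\to 1$ as $\varepsilon\to 0$. The measure estimate $|\Sigma\setminus\Sigma_\varepsilon|=O(\epsilon^{1/N})$ then follows by rerunning the Rüssmann argument of part 1 with the tangential frequencies on $\Sigma$.

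The main obstacle is the quantitative measure estimate in the multi-scale setting: the scale factors $\varepsilon_i$ appear with different weights in the derivatives $\partial_I^\alpha\omega$, so one must verify that the Rüssmann constant inherited from \textbf{(R)} is controlled independently of the $\varepsilon_i$, and that the correct exponent $1/N$ applies to $\epsilon=\varepsilon/\tilde\varepsilon$ rather than to $\varepsilon$ itself. Once the resonant zones are shown to have thickness proportional to $\gamma^{1/N}$ uniformly in the small parameters, matching \textbf{(K)} and \textbf{(I)} with the abstract frequency-preservation hypothesis of \cite{Qian} becomes a quantitative linear-algebra check based on the spectral lower bounds $\mathcal{N}^T\mathcal{N}\ge c\tilde\varepsilon^2 I$ and $\tilde{\mathcal{N}}^T\tilde{\mathcal{N}}\ge c\tilde\varepsilon^2 I$.
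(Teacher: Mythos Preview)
Your proposal is broadly sensible, but it diverges from the paper's approach in a structural way, and the reduction you propose hides exactly the difficulty the paper is written to resolve.

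The paper does \emph{not} reduce Theorem~\ref{MainTheorem} to the abstract result of \cite{Qian}. Instead it formulates and proves from scratch a parameterized multi-scale KAM theorem (Theorem~\ref{Theorem1}) under hypotheses $\mathbf{(R')},\mathbf{(K')},\mathbf{(I')}$, and then derives Theorem~\ref{MainTheorem} from Theorem~\ref{Theorem1} by a localization trick: for $\xi\in\Lambda\subset D$ one Taylor-expands $\tilde H(\xi+I)$ to order $m-1$, takes $r=\varepsilon^{1/m}$ so that the tail $\sum_{|\jmath|\ge m}h_\jmath I^\jmath$ is $O(\varepsilon)$, and thereby lands in the parameterized normal form (\ref{M2}) with $\xi$ playing the role of an external parameter. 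Conditions $\mathbf{(R)},\mathbf{(K)},\mathbf{(I)}$ on $\partial_I\tilde H,\partial_I^2\tilde H$ then become $\mathbf{(R')},\mathbf{(K')},\mathbf{(I')}$ on $\omega(\xi),A(\xi)$ verbatim. The iterative scheme, the frequency-preservation steps, and the measure estimate are all carried out for Theorem~\ref{Theorem1}, not borrowed from \cite{Qian}.

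Within that scheme, the frequency preservation in parts 2 and 3 is done by explicit linear algebra (Lemmas~\ref{LM2} and \ref{LM3}) rather than the implicit function theorem: one orthogonally conjugates $A$ (resp.\ the bordered matrix) to expose a nonsingular $n_1\times n_1$ (resp.\ $(n_1{+}1)\times(n_1{+}1)$) block, solves a reduced linear system for a translation $I\mapsto I+I^*$, and uses the spectral lower bound in $\mathbf{(K')}$/$\mathbf{(I')}$ to get $|I^*|\le\epsilon/r$. This is close in spirit to what you sketch, but the paper is explicit about the block decomposition and the residual $(D_1P_{01}+\bar P_{01})$-type terms that survive in the unpreserved components.

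Where your plan has a genuine gap is the reduction to \cite{Qian}: the stated motivation of the present paper is precisely that the hypotheses in \cite{Qian} are too abstract to check from $\mathbf{(R)},\mathbf{(K)},\mathbf{(I)}$ directly, so asserting that they ``imply the general non-degeneracy conditions used there'' begs the question. Relatedly, the multi-scale measure estimate is not a routine R\"ussmann--Xu--You computation: the paper needs a separate matrix lemma (Appendix, Lemma~\ref{Eigenvalue}) showing that for $A=\sum_j\varepsilon_jA_j$ the smallest eigenvalue of $AA^*$ is bounded below by $c\min_j\varepsilon_j^2$, and this is what feeds into Lemma~\ref{estimate measure} to obtain thickness $\sim\gamma^{1/(N+1)}$ uniformly in the $\varepsilon_i$. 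You correctly flag this as the main obstacle, but your outline does not supply the argument; the paper's proof is exactly where that uniformity is established.
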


\begin{remark}
Condition $\textbf{(R)}$ ensures the existence of multi-scale invariant tori, and it is a more verifiable condition.
\end{remark}
\begin{remark}
Condition $\textbf{(K)}$ ensures the persistence of frequency components. When $n_1 = n$, condition $\textbf{(R)}$ is automatically satisfied. Specifically, when $n_1=n$, result \textbf{(2)} indicates that the multi-scale invariant tori with Diophantine frequency $\omega(I_0)$ for the unperturbed system persist under small perturbations.
\end{remark}

\begin{remark}
Conditions $\textbf{(K)}$ and $\textbf{(I)}$ together ensure the persistence of components of the frequency ratio. When $n_1 = n$, result \textbf{(3)} indicates that there exist perturbed tori for the perturbed system on a given energy surface. However, the frequency on the perturbed tori is $t \omega(I_0)$, not $\omega(I_0)$, where $t\rightarrow 0$ as $\varepsilon\rightarrow 0$.

\end{remark}

To prove Theorem \ref{MainTheorem}, we introduce the following parameterized multi-scale Hamiltonian system:
\begin{eqnarray}
\label{M2}\mathcal{H} (I, \theta, \xi) &=& \mathcal{N}(I, \xi, \varepsilon_i) + \mathcal{P}(I, \theta, \xi),\\
\nonumber \mathcal{N}(I, \xi, \varepsilon_i) &=& e(\xi,\varepsilon_i) + \langle \omega(\xi,\varepsilon_i), I\rangle + \frac{1}{2} \langle I, A(\xi,\varepsilon_i)I \rangle + \sum\limits_{3\leq |\jmath| \leq m-1} h_{\jmath} (\xi,\varepsilon_i) I^\jmath,\\
\nonumber e(\xi, \varepsilon_i) &=& \varepsilon_1 e^1(\xi) + \cdots + \varepsilon_\mathfrak{m} e^\mathfrak{m}(\xi), \\
\nonumber \omega(\xi, \varepsilon_i) &=& \varepsilon_1 \omega^1(\xi) + \cdots + \varepsilon_\mathfrak{m}\omega^\mathfrak{m}(\xi), \\
\nonumber A(\xi, \varepsilon_i) &=& \varepsilon_1 A^1(\xi) + \cdots + \varepsilon_\mathfrak{m} A^\mathfrak{m}(\xi),\\
\nonumber h_{\jmath,} (\xi, \varepsilon_i) &=& \varepsilon_1 h_{\jmath}^1(\xi) + \cdots + \varepsilon_\mathfrak{m} h_{\jmath}^\mathfrak{m}(\xi),\\
\nonumber |\mathcal{P}|_{r,s,h} &<& c \varepsilon,
\end{eqnarray}
where $(\theta, I) \in D_{r,s} = \{(I, \theta): |Im \theta| < s, |I|< r\}$, $\xi\in \Lambda \subseteq\mathds{R}^n$, $\Lambda$ represents a bounded closed region, $\varepsilon = \min\limits_{i}\{\varepsilon_i: 1\leq i \leq\mathfrak{m} \} \epsilon$, $0<\varepsilon, \epsilon, \varepsilon_i\ll1$, $|\cdot|_{r,s,h} = \sup\limits_{(I, \theta)\in D_{r,s}, \xi\in \Lambda_h} |\cdot|$, $\Lambda_h =\{\xi\in \mathds{C}^n: dist(\xi, \partial \Lambda)\leq h\}.$

To present the results for the parameterized Hamiltonian $(\ref{M2})$, we first outline some assumptions.

\begin{itemize}
  \item [$\bf(R')$] There exists $N>0$ such that
  $$rank \{ \partial_\xi^\alpha \omega, 0\leq|\alpha| \leq n - 1\} = n, \forall \xi \in \Lambda.$$
\end{itemize}
\begin{itemize}
\item[$\bf{(K')}$] Assume that there is an $n_1 \times n_1$ submatrix $\mathcal{A}$ of $A(\xi, \varepsilon_i)$ such that
\begin{eqnarray*}
\mathcal{A}^T \mathcal{A} \geq c \tilde{\varepsilon}^2 I_{n_1\times n_1},
\end{eqnarray*}
where $c>0$ is a constant and $\tilde{\varepsilon} = \min\limits_{0\leq i\leq \mathfrak{m}} \{\varepsilon_i\}$.
\end{itemize}
\begin{itemize}
\item[$\bf{(I')}$] Assume that there is an $(n_1+1) \times (n_1+1)$ submatrix $\tilde{\mathcal{A}}$ of $\left(
  \begin{array}{cc}
    A(\xi, \varepsilon_i) & \omega^T (\xi, \varepsilon_i)\\
     \omega(\xi, \varepsilon_i) & 0 \\
  \end{array}
\right)$ such that
\begin{eqnarray*}
\tilde{\mathcal{A}}^T \tilde{\mathcal{A}} \geq c \tilde{\varepsilon}^2 I_{(n_1+1)\times (n_1+1)},
\end{eqnarray*}
where $c$ is a positive constant and $\tilde{\varepsilon} = \min\limits_{0\leq i\leq \mathfrak{m}} \{\varepsilon_i\}$.
\end{itemize}

\begin{theorem}\label{Theorem1}
Denote $\tilde{\varepsilon} = \min\limits_{0\leq i\leq \mathfrak{m}} \{\varepsilon_i\}$ and $\varepsilon = \epsilon\tilde{\varepsilon}$.  Consider Hamiltonian $(\ref{M2})$ on $D_{r,s}\times \Lambda_h$.
\begin{enumerate}
  \item [\bf{1).}] Assuming condition $\bf{(R')}$ holds, there exist $\varepsilon_0>0$ and a family of Cantor sets $\Lambda_\varepsilon \subset \Lambda$ for $0< \varepsilon \leq \varepsilon_0$, such that $|\Lambda\setminus \Lambda_\varepsilon| = O(\epsilon^{\frac{1}{N}})$. Additionally, there exists a symplectic transformation $\Phi:$ $D(\frac{r}{2}, \frac{s}{2}) \times \Lambda_\varepsilon \rightarrow D(r,s) \times \Lambda$ that is close to the identity, satisfying
\begin{eqnarray*}
H\circ \Phi = e_*+ \langle \omega_*, y\rangle + \langle y, A_*y\rangle + \mathcal{P}_*(x,y),
\end{eqnarray*}
where $\mathcal{P}_* = \mathcal{P}_*(x,y)= O(|y|^3)$. Consequently, for $\xi \in \Lambda_\varepsilon$, the unperturbed tori $T_{\xi}=\mathds{T}^n \times \{\xi\}$ with toral frequency $\omega$ persist, leading to an invariant torus of the perturbed system with the toral frequency $\omega_*$;
  \item [\bf{2).}]Assuming $(R')$ and $(K')$ hold on $\Lambda$, there exist $\varepsilon_0>0$ and a family of Cantor sets $\Lambda_\varepsilon \subset \Lambda$ for $0< \varepsilon \leq \varepsilon_0$, such that $|\Lambda\setminus \Lambda_\varepsilon| = O(\varepsilon^{\frac{1}{N}})$. Additionally, there is a symplectic transformation $\Phi:$ $D(\frac{r}{2}, \frac{s}{2}) \times \Lambda_\varepsilon \rightarrow D(r,s) \times \Lambda$ that is close to the identity, satisfying
\begin{eqnarray*}
H\circ \Phi = e_*+ \langle \omega_*, y\rangle + \langle y, A_*y\rangle + \mathcal{P}_*(x,y),
\end{eqnarray*}
where $\mathcal{P}_* = \mathcal{P}_*(x,y)= O(|y|^3)$. Moreover, we have
\begin{eqnarray*}
\big(\omega_*\big) _{i_q} = \big(\omega \big) _{i_q}, 1\leq q \leq n_1.
\end{eqnarray*}
Therefore, for $\xi \in \Lambda_\varepsilon$, the unperturbed tori $T_{\xi}= \mathds{T}^n \times \{\xi\}$ with toral frequency $\omega_*$ persist and give rise to an invariant torus of the perturbed system with the toral frequency $\omega_*$, which preserves the frequency components  $\omega_{i_1}$ $\cdots$, $\omega_{i_{n_1}}$ of the unperturbed toral frequency $\omega$;

  \item [\bf{3).}]
Let $\Sigma = \{\xi: \tilde{N}(\xi) = c\}$ be a given energy surface. Assume that $(R')$, $(K')$ and $(I')$ hold on $\Sigma$. Then, there exist $\varepsilon_0>0$, a family of Cantor sets $\Sigma_\varepsilon \subset \Sigma$ for $0< \varepsilon \leq \varepsilon_0$, such that $|\Sigma\setminus \Sigma_\varepsilon| = O(\varepsilon^{\frac{1}{N}})$, and a symplectic transformation $\Phi:$ $D(\frac{r}{2}, \frac{s}{2}) \times \Sigma_\varepsilon \rightarrow D(r,s)  \times \Sigma$, which is close to the identity such that
\begin{eqnarray*}
H\circ \Phi = e_*+ \langle \omega_*, y\rangle + \langle y, A_*y\rangle + \mathcal{P}_*(x,y),
\end{eqnarray*}
where $\mathcal{P}_* = \mathcal{P}_*(x,y)= O(|y|^3)$. Moreover,
\begin{eqnarray*}
\big[\omega_{*, i_1}: \cdots: \omega_{*, i_{n_1}}\big] = \big[\omega_{i_1}: \cdots: \omega_{i_{n_1}}\big].
\end{eqnarray*}
Therefore, for $\xi \in \Sigma_\varepsilon$, the unperturbed tori $T_{\xi}=\mathds{T}^n \times \{\xi\}$ with toral frequency $\omega(\xi)$ persist and give rise to an invariant torus of the perturbed system with toral frequency $\omega_*$, which preserves the frequency ratio $[\omega_{i_1} : \cdots : \omega_{i_{n_1}} ]$ of the unperturbed toral frequency $\omega(\xi)$.
\end{enumerate}

\end{theorem}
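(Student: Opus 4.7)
The plan is to prove Theorem \ref{Theorem1} by running the multi-scale KAM iteration developed in \cite{Qian}, modified at each step to exploit the non-degeneracy afforded by $(R')$, $(K')$ and $(I')$. I would inductively construct symplectic transformations $\Phi_\nu$ and Hamiltonians $\mathcal{H}_\nu = \mathcal{N}_\nu + \mathcal{P}_\nu$ on shrinking domains $D(r_\nu, s_\nu)\times \Lambda_{h_\nu}$, with $|\mathcal{P}_\nu|_{r_\nu,s_\nu,h_\nu} \leq \varepsilon_\nu$ decaying super-exponentially, and set $\Phi = \lim \Phi_1 \circ \cdots \circ \Phi_\nu$. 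Each step is structurally classical: Fourier-truncate $\mathcal{P}_\nu$ at order $K_\nu$, solve the homological equation $\{F_\nu, \mathcal{N}_\nu\} = R_\nu$ with denominators $i\langle k, \omega_\nu(\xi)\rangle$ under a Diophantine cut $|\langle k,\omega_\nu\rangle| \geq \gamma_\nu/|k|^\tau$, absorb the cohomological mean into the next normal form $\mathcal{N}_{\nu+1}$, and estimate the new remainder $\mathcal{P}_{\nu+1} = O(\varepsilon_\nu^2)$ on the smaller domain.

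For conclusion (1), the only new work is the measure estimate. Condition $(R')$ is a Rüssmann-type non-degeneracy on the $\alpha$-derivatives of $\omega$ for $|\alpha|\leq n-1$. Using the associated Pyartli-type lemma, for each $k$ the resonant slab $\{\xi\in \Lambda: |\langle k,\omega(\xi)\rangle| < \gamma/|k|^\tau\}$ has Lebesgue measure bounded by $C(\gamma/|k|^\tau)^{1/N}$, and summing over the successive Fourier cutoffs yields $|\Lambda\setminus \Lambda_\varepsilon| = O(\epsilon^{1/N})$. For $\xi$ in the intersection $\Lambda_\varepsilon$, the iterates $\Phi_\nu$ converge to a real analytic, close-to-identity $\Phi$ transforming $\mathcal{H}$ into the form stated.

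For conclusions (2) and (3) I would insert, at each KAM step, a parameter-shift $\xi \mapsto \xi + \eta_\nu(\xi)$ chosen so that the prescribed frequency components (respectively, frequency ratios) of $\omega_{\nu+1}$ match their targets. To leading order this imposes a linear equation $\mathcal{A}_\nu \eta_\nu = v_\nu$ on the $n_1$-dimensional subspace provided by the submatrix in $(K')$, respectively $\tilde{\mathcal{A}}_\nu \eta_\nu = \tilde{v}_\nu$ on the $(n_1+1)$-dimensional subspace provided by $(I')$. Condition $(K')$ yields $\|\mathcal{A}_\nu^{-1}\| = O(\tilde{\varepsilon}^{-1})$; $(I')$ does the same for the bordered matrix, and the extra row/column there encodes simultaneously the energy-surface constraint $\tilde{N}(\xi + \eta_\nu) = c$ and the fact that we are pinning down a direction in $(\omega_{i_1},\ldots,\omega_{i_{n_1}})$-space rather than the individual entries, which is why $(I')$ preserves the ratio rather than the components themselves.

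The main obstacle is the multi-scale bookkeeping. Since $\omega = \sum_i \varepsilon_i \omega^i$ is itself $O(\tilde{\varepsilon})$, the Diophantine constants satisfy $\gamma_\nu = O(\tilde{\varepsilon})$, so each small-divisor estimate, each inverse $\mathcal{A}_\nu^{-1}$ or $\tilde{\mathcal{A}}_\nu^{-1}$, and each estimate on the higher-order terms $h_\jmath$ of $\mathcal{N}$ contributes a factor of $\tilde{\varepsilon}^{-1}$. A naive iteration would see these powers accumulate and destroy the quadratic convergence. The point of the non-degeneracy conditions is precisely that the relevant matrices are $O(\tilde{\varepsilon})$ and not smaller, so their pseudo-inverses cost only $O(\tilde{\varepsilon}^{-1})$, a loss exactly absorbed by writing $\varepsilon = \epsilon \tilde{\varepsilon}$ and running the iteration with $\epsilon$ as the genuine small parameter. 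Carrying this bookkeeping consistently at every KAM step, and propagating it through the final measure and convergence estimates, will be the principal technical burden.
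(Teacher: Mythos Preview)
Your outline for part (1) is correct and matches the paper's approach: a multi-scale KAM iteration together with a R\"ussmann-type measure estimate derived from $(R')$.

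For parts (2) and (3), however, you have misidentified the correction mechanism. You propose a \emph{parameter} shift $\xi \mapsto \xi + \eta_\nu$ and claim that $(K')$ supplies the invertible matrix for the resulting linear system. But the matrix $\mathcal{A}$ in condition $(K')$ is a submatrix of $A(\xi,\varepsilon_i)$, the coefficient of $\tfrac{1}{2}\langle I, A I\rangle$ in the normal form $\mathcal{N}$; it measures how the frequency changes under a translation of the \emph{action} $I$, not of the parameter $\xi$. After the homological step the updated frequency is $\omega + P_{01}$, and a symplectic translation $I \to I + I^*$ produces a further shift $A I^*$, so solving $A I^* = -P_{01}$ on the $n_1$-dimensional subspace singled out by $(K')$ kills the designated $n_1$ components of the drift. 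This is exactly what the paper does (Lemma~\ref{LM2}); the same device with the bordered matrix from $(I')$, an extra scalar unknown $t$ multiplying $\omega$, and the energy constraint as the additional equation, handles part (3) (Lemma~\ref{LM3}).

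By contrast, a parameter shift $\xi\mapsto \xi+\eta$ linearizes to $(\partial_\xi\omega)\,\eta$, about which $(K')$ says nothing: in the abstract Hamiltonian $(\ref{M2})$ no relation between $\partial_\xi\omega$ and $A$ is assumed. Condition $(R')$ only yields a rank statement on the family $\{\partial_\xi^\alpha\omega : |\alpha|\le n-1\}$, not invertibility of $\partial_\xi\omega$ itself, so it does not furnish a solvable system for $\eta_\nu$ either. Your $\tilde{\varepsilon}$-bookkeeping is on target, but the loss $\|\mathcal{A}^{-1}\| = O(\tilde{\varepsilon}^{-1})$ you anticipate should be read as the cost of the action translation: combined with a right-hand side of size $O(\varepsilon/r)=O(\tilde\varepsilon\,\epsilon/r)$ it gives $|I^*|=O(\epsilon/r)$, which is what keeps the translation inside the shrinking KAM domains.
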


This paper is structured as follows. We prove Theorem \ref{Theorem1} in Section \ref{ProofofTH2}. Specifically, the proof is divided into four parts:
\begin{itemize}
  \item \textbf{KAM Step}: For the parameterized multi-scale Hamiltonian system, we introduce a KAM step that involves truncating the perturbation, extending the small divisor estimate to a larger region, solving the homological equation, and performing frequency and coordinate transformations, while estimating the new error term.
  \item \textbf{Iteration Lemma}: We construct a family of iterative sequences that ensure the KAM step can be applied infinitely.
  \item \textbf{Convergence}: We demonstrate that the composition of an infinite series of nearly identity symplectic transformations converges within the iterative sequence.
  \item \textbf{Measure Estimate}: In the KAM step, we resolve the homological equation on a set of nearly full measure and show that after infinitely many iterations, the remaining set also has nearly full measure.
\end{itemize}
In Section \ref{SEC3}, we prove Theorem \ref{MainTheorem} using Theorem \ref{Theorem1}. Finally, in Section \ref{Example}, we provide an example inspired by \cite{Cors1}.

\section{Proof of Theorem \ref{Theorem1}}\label{ProofofTH2}

Let $\Lambda$ represent a bounded closed region in $\mathds{R}^n$ and $\Lambda_h =\{\xi\in \mathds{C}^n: dist(\xi, \partial \Lambda)\leq h\}.$ Recall $\tilde{\varepsilon} = \min\limits_{0\leq i\leq \mathfrak{m}} \{\varepsilon_i\}$. For fixed $\gamma>0$ and $\tau\geq n-1$, let
\begin{eqnarray*}
\Omega_{\gamma, \tau}= \{\xi\in \Lambda: |\langle k, \omega(\xi,\varepsilon_i)\rangle| \geq \frac{\tilde{\varepsilon}\gamma}{|k|^\tau}, 0\neq k \in \mathds{Z}^n\},
\end{eqnarray*}
where $|k|= |k_1|+ \cdots + |k_n|.$ Denote $\Omega_{\gamma, \tau}^\gamma = \{\xi\in \Omega_{\gamma, \tau}: dist (\xi, \partial \Lambda) < \gamma\}$, $|\cdot|_{r,s} = \sup\limits_{(I, \theta)\in D_{r,s}} |\cdot|$ and $|\cdot|_{r,s,h} = \sup\limits_{(I, \theta)\in D_{r,s}, \xi\in \Lambda_h} |\cdot|$.

Consider the following real analytic parameterized multi-scale Hamiltonian system:
\begin{eqnarray}
\label{EQ2}\mathcal{H} (I, \theta, \xi) &=& \mathcal{N}(I, \xi, \varepsilon_i) + \mathcal{P}(I, \theta, \xi),\\
\nonumber \mathcal{N}(I, \xi, \varepsilon_i) &=& e(\xi,\varepsilon_i) + \langle \omega(\xi,\varepsilon_i), I\rangle + \frac{1}{2} \langle I, A(\xi,\varepsilon_i)I \rangle + \sum\limits_{3\leq |\jmath| \leq m-1} h_{\jmath} (\xi,\varepsilon_i) I^\jmath,\\
\nonumber e(\xi, \varepsilon_i) &=& \varepsilon_1 e^1(\xi) + \cdots + \varepsilon_\mathfrak{m} e^\mathfrak{m}(\xi), \\
\nonumber \omega(\xi, \varepsilon_i) &=& \varepsilon_1 \omega^1(\xi) + \cdots + \varepsilon_\mathfrak{m}\omega^\mathfrak{m}(\xi), \\
\nonumber A(\xi, \varepsilon_i) &=& \varepsilon_1 A^1(\xi) + \cdots + \varepsilon_\mathfrak{m} A^\mathfrak{m}(\xi),\\
\nonumber h_{\jmath,} (\xi, \varepsilon_i) &=& \varepsilon_1 h_{\jmath}^1(\xi) + \cdots + \varepsilon_\mathfrak{m} h_{\jmath}^\mathfrak{m}(\xi),\\
\nonumber |\mathcal{P}|_{r,s,h} &<& c \varepsilon,
\end{eqnarray}
where $(\theta, I) \in D_{r,s} = \{(I, \theta): |Im \theta| < s, |I|< r\}$, $\xi\in \Lambda \subseteq\mathds{R}^n$, $\Lambda_h =\{\xi\in \mathds{C}^n: dist(\xi, \partial \Lambda)\leq h\}$, $\varepsilon = \tilde{\varepsilon} \epsilon$, $0<\varepsilon, \epsilon, \varepsilon_i\ll1$, $|\cdot|_{r,s,h} = \sup\limits_{(I, \theta)\in D_{r,s}, \xi\in \Lambda_h} |\cdot|$.

\subsection{A KAM Step}\label{AKAMstep}

 We summarize a KAM step into the following proposition.
\begin{pro}
Consider Hamiltonian (\ref{EQ2}). Assume
\begin{itemize}
  \item [(a)] $K > \frac{n-1}{\sigma}$,
  \item [(b)] $\int_{K}^{\infty} x ^{n-1} e^{-x \sigma} dx < \eta^m$,
  \item [(c)] $\hat{\varepsilon} h  K^{\tau+1} = o(\gamma \tilde{\varepsilon}),$
  \item [(d)] $r K^{\tau+1} = o(\gamma \tilde{\varepsilon})$,
  \item [(e)]$\epsilon < \gamma r^2 \eta^m \sigma^{\tau + 1}$.
\end{itemize}
 Then there exists a transformation:
\begin{eqnarray*}
\mathcal{F} = \Phi\circ \varphi = (U, V, \varphi):  D_{s-5\sigma, \eta r }\times\Lambda_{\eta h}  \rightarrow D_{s, r}\times \Lambda_{h},
\end{eqnarray*}
such that
\begin{eqnarray*}
\mathcal{H}\circ \mathcal{F} = \mathcal{N}_+ + \mathcal{P}_+,
\end{eqnarray*}
where
\begin{eqnarray*}
\mathcal{N}_+ (I, \xi,\varepsilon_i) &=&e_+(,\varepsilon_i) + \langle \omega(\xi,\varepsilon_i), I\rangle + \frac{1}{2} \langle I, A_+(\xi,\varepsilon_i)I \rangle + \sum\limits_{3\leq |\jmath| \leq m-1} h_{\jmath} (\xi,\varepsilon_i) I^\jmath,\\
|\mathcal{P}_+|_{\eta r, s- 5\sigma, \eta h} &<& \varepsilon_+< \gamma r_+^2 \eta_+^m \sigma_+^{\tau + 1}.
\end{eqnarray*}
Moreover,
\begin{eqnarray*}
|U - id |_{\frac{r}{2}, s- 3\sigma} &<& \frac{\varepsilon}{\tilde{\varepsilon} \gamma \sigma^{n+1}},  |V- id|_{\frac{r}{2}, s- 3\sigma} < \frac{\varepsilon}{\tilde{\varepsilon} \gamma r \sigma^\tau},\\
|U_I - I|_{\eta r, s- 3\sigma} &<& \frac{\varepsilon}{\tilde{\varepsilon} \gamma r \sigma^{\tau+1}}, |V_\theta - I|_{\frac{r}{2}, s- 4\sigma} < \frac{\varepsilon}{ \tilde{\varepsilon} \gamma r \sigma^{\tau+1}}, \\
|U_\theta|_{\frac{r}{2}, s- 4\sigma} &<& \frac{\varepsilon}{ \tilde{\varepsilon} \gamma \sigma^{\nu+1}}, |V_I|_{\eta r, s- 3\sigma}< \frac{\varepsilon}{\tilde{\varepsilon} \gamma r^2 \sigma^{\tau+1}},\\
 |\varphi - id|_{\frac{h}{2}} &<& \frac{\varepsilon}{r}, |D\varphi - I|_{\frac{h}{4}}< \frac{\varepsilon}{r},\\
 |\partial_\xi (U - id )|_{\frac{r}{2}, s- 3\sigma, \frac{h}{2}}&<& \frac{\varepsilon}{ \tilde{\varepsilon} \gamma h \sigma^{n+1}},
 |\partial_\xi (V- id)|_{\frac{r}{2}, s- 3\sigma, \frac{h}{2}} < \frac{\varepsilon}{\tilde{\varepsilon} \gamma r h \sigma^\tau}.
\end{eqnarray*}
\end{pro}

We divide the proof of this Proposition into six parts: truncation, extending the small divisor estimate, the homological equation, frequency transformation, coordinate transformation, and the new error term.

\subsubsection{Truncation}
Consider the Taylor-Fourier series of $\mathcal{P}(I, \theta, \xi):$ $\mathcal{P}(I, \theta, \xi) = \sum\limits_{\jmath\in \mathds{Z}_+^n, k\in \mathds{Z}^n} P_{k\jmath} I^\jmath e^{i \langle k,  \theta\rangle}.$
Denote $K$ a sufficient large number. Let $Q(I, \theta, \xi) = \sum\limits_{|\jmath| \leq m-1, k\in \mathds{Z}^n} P_{k\jmath} I^\jmath e^{i \langle k,  \theta\rangle},$ $R(I, \theta, \xi) = \sum\limits_{|\jmath| \leq \jmath, |k|\leq K} P_{k\jmath} I^\jmath e^{i \langle k,  \theta\rangle}.$

\begin{lemma}
Assume
\begin{enumerate}
  \item [$(a)$] $K > \frac{n-1}{\sigma}$,
  \item [$(b)$] $\int_{K}^{\infty} x ^{n-1} e^{-x s} dx < \eta^m$.
\end{enumerate}
Then there is a constant $c= c(n)>0$ such that
\begin{eqnarray}
|\mathcal{P}-R|_{2\eta r, s-\sigma} &<& c(n)\eta^m \varepsilon,\\
|R|_{r, s-\sigma}&<& c(n) \varepsilon.
\end{eqnarray}
\end{lemma}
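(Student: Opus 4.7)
The plan is to decompose $\mathcal{P}-R = (\mathcal{P}-Q) + (Q-R)$ and control the Taylor tail in $I$ and the Fourier tail in $\theta$ separately on their natural shrunk domains. Both pieces will contribute $O(\eta^m \varepsilon)$, and the bound on $|R|_{r, s-\sigma}$ will then follow by a triangle inequality together with a direct estimate on the Taylor truncation $Q$.

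For the Taylor tail $\mathcal{P}-Q$, I would first apply Cauchy's inequality on the polydisc $D_{r,s}$ to obtain the coefficient bound
$$|P_{k\jmath}| \leq c\varepsilon\, r^{-|\jmath|} e^{-|k|s}.$$
At each fixed $\theta$ with $|Im\,\theta|\leq s-\sigma$, the difference $\mathcal{P}(\cdot,\theta)-Q(\cdot,\theta)$ is the Taylor remainder of order $m-1$; evaluating at $|I|\leq 2\eta r$ yields the geometric gain $\sum_{|\jmath|\geq m}(2\eta)^{|\jmath|}\binom{|\jmath|+n-1}{n-1}\leq c(n)\eta^m$ whenever $2\eta<1$. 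Taking the supremum in $\theta$ gives
$$|\mathcal{P}-Q|_{2\eta r,\, s-\sigma}\leq c(n)\eta^m\varepsilon,$$
crucially with no loss in $\sigma$.

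For the Fourier tail $Q-R = \sum_{|k|>K} Q_k(I)\, e^{i\langle k,\theta\rangle}$, Cauchy in $I$ gives $|Q_k|_r \leq c(n)|\mathcal{P}_k|_r \leq c(n)\varepsilon\, e^{-|k|s}$, so on $|I|\leq r$ and $|Im\,\theta|\leq s-\sigma$,
$$|Q-R|_{r,\,s-\sigma} \leq c(n)\varepsilon\sum_{|k|>K}e^{-|k|\sigma} \leq c(n)\varepsilon\int_K^\infty x^{n-1}e^{-x\sigma}\,dx \leq c(n)\eta^m\varepsilon,$$
where hypothesis (a) (ensuring $x^{n-1}e^{-x\sigma}$ is monotone on $[K,\infty)$) legitimizes the lattice-to-integral comparison and hypothesis (b) supplies the final inequality. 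Adding the two pieces gives the claimed bound on $|\mathcal{P}-R|_{2\eta r, s-\sigma}$. The estimate $|R|_{r,s-\sigma}<c(n)\varepsilon$ then follows from $R = Q-(Q-R)$ together with $|Q|_{r,s-\sigma}\leq c(n)|\mathcal{P}|_{r,s-\sigma}\leq c(n)\varepsilon$, the latter obtained coefficient-by-coefficient via Cauchy with the finite combinatorial factor $\binom{m-1+n}{n}$ absorbed into $c(n)$ since $m$ is fixed.

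The argument is essentially bookkeeping and no deep step is involved; the only technical point is to evaluate the Taylor bound on the shrunk $I$-disc $|I|\leq 2\eta r$ (without $\theta$-shrinking beyond $s-\sigma$) while keeping the Fourier bound on the full $|I|\leq r$, so that neither estimate acquires a spurious $\sigma^{-n}$ loss nor spoils the $\eta^m$ gain coming from the Taylor truncation.
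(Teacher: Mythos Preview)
Your proposal is correct and follows essentially the same route as the paper: the decomposition $\mathcal{P}-R=(\mathcal{P}-Q)+(Q-R)$, a Cauchy/geometric-series bound for the Taylor tail on $|I|\leq 2\eta r$, and the lattice-to-integral comparison $\sum_{|k|>K}e^{-|k|\sigma}\leq c(n)\int_K^\infty x^{n-1}e^{-x\sigma}\,dx$ for the Fourier tail, with the bound on $|R|$ recovered by triangle inequality. The only cosmetic difference is that the paper applies Cauchy's estimate globally (writing $\sum_{|\jmath|\geq m}\frac{(2\eta r)^{|\jmath|}}{(r-2\eta r)^{|\jmath|}}|\mathcal{P}|_{r,s}$ directly and bounding $|\mathcal{P}-Q|$ on the full strip $|Im\,\theta|<s$) rather than passing through coefficient bounds; your version is slightly more explicit about why hypothesis~(a) is needed for the integral comparison.
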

\begin{proof}
Obviously, $|Q|_{r,s} < \varepsilon$.  According to Cauchy's estimate, we have
\begin{eqnarray*}
|\mathcal{P}-Q|_{2\eta r ,s} &=& | \sum\limits_{|\jmath| \geq m, k\in \mathds{Z}^n} P_{k\jmath} I^\jmath e^{i \langle k,  \theta\rangle}|_{2\eta r ,s}\\
&\leq& \sum\limits_{ |\jmath| \geq m} \frac{(2 \eta r)^\jmath}{(r - 2 \eta r)^\jmath} | \mathcal{P} |_{r,s} \leq \eta^m \varepsilon.
\end{eqnarray*}
Then, combining assumptions $(a)$ and $(b)$, we obtain
\begin{eqnarray*}
|Q-R|_{r, s-\sigma}&=& |\sum\limits_{|\jmath| \leq m-1, |k| > K} P_{k\jmath} I^\jmath e^{i \langle k,  \theta\rangle}|_{r,s-\sigma}\\
&\leq&  \sum\limits_{|k| > K} |\mathcal{P}|_{r,s} e^{-|k|\sigma}\\
&\leq& \int_{K}^{\infty} 4^n x ^{n-1} e^{-x \sigma} dx |\mathcal{P}|_{r,s} \leq |\mathcal{P}|_{r,s} \eta^m.
\end{eqnarray*}
Therefore,
\begin{eqnarray*}
|\mathcal{P}-R|_{2\eta r, s-\sigma} &<& |\mathcal{P}-Q|_{2\eta r ,s}+ |Q-R|_{r, s-\sigma}< c(n)\eta^m \varepsilon.
\end{eqnarray*}
Using assumption (b), we have
\begin{eqnarray*}
|R|_{r, s-\sigma} &<& c(n) \varepsilon.
\end{eqnarray*}

\end{proof}

\subsubsection{Extending the Small Divisor Estimate.}
The nonresonant conditions are assumed to hold only on the real set $\Omega_{\gamma,\tau}$. The next lemma demonstrates that these nonresonant conditions also hold on $\Lambda_h$.

\begin{lemma}
Assume
\begin{itemize}
  \item [$(c)$] $\hat{\varepsilon} h  K^{\tau+1} = o(\gamma \tilde{\varepsilon}).$
\end{itemize}
Then, on $O_h$, $|\langle k, \omega(\xi, \varepsilon_i)\rangle| > \frac{\tilde{\varepsilon} \gamma}{2|k|^\tau}.$
\end{lemma}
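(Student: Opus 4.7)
The plan is a standard perturbative argument in KAM theory. On the real Diophantine set $\Omega_{\gamma,\tau}$ the inequality $|\langle k, \omega(\xi_0,\varepsilon_i)\rangle| \geq \tilde{\varepsilon}\gamma/|k|^\tau$ holds with a full gap, and any complex perturbation of $\xi$ of size at most $h$ shifts $\langle k,\omega\rangle$ by a controllable amount. The previous truncation step restricts attention to Fourier modes $|k|\leq K$, so only finitely many small divisors must survive the complex extension, and the goal is to arrange that the perturbation consumes at most half of the Diophantine gap on the worst surviving mode.

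Concretely, I would fix $k\in\mathds{Z}^n\setminus\{0\}$ with $|k|\leq K$ and pick $\xi\in O_h$. By definition of the $h$-neighborhood, there exists $\xi_0\in\Omega_{\gamma,\tau}$ with $|\xi-\xi_0|\leq h$. Since $\omega(\cdot,\varepsilon_i)=\sum_{i=1}^{\mathfrak{m}}\varepsilon_i\omega^i$ is holomorphic on $\Lambda_h$ and each $\omega^i$ has bounded $C^1$ norm independent of the $\varepsilon_i$, a Cauchy estimate yields $|\partial_\xi\omega|_{\Lambda_h}\lesssim\hat{\varepsilon}$, where $\hat{\varepsilon}$ controls $\max_i\varepsilon_i$ up to a universal constant. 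Integrating along the segment from $\xi_0$ to $\xi$ then gives $|\omega(\xi,\varepsilon_i)-\omega(\xi_0,\varepsilon_i)|\lesssim\hat{\varepsilon}h$.

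Applying the triangle inequality,
\[
|\langle k,\omega(\xi,\varepsilon_i)\rangle|\geq |\langle k,\omega(\xi_0,\varepsilon_i)\rangle|-|k|\,|\omega(\xi,\varepsilon_i)-\omega(\xi_0,\varepsilon_i)|\geq \frac{\tilde{\varepsilon}\gamma}{|k|^\tau}-c\,|k|\,\hat{\varepsilon}\,h.
\]
Using $|k|\leq K$ together with assumption $(c)$, namely $\hat{\varepsilon} h K^{\tau+1}=o(\gamma\tilde{\varepsilon})$, the subtracted term is at most $\tilde{\varepsilon}\gamma/(2K^\tau)\leq \tilde{\varepsilon}\gamma/(2|k|^\tau)$ provided the small parameters are chosen sufficiently small, which yields the desired lower bound.

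The only delicate point is calibrating $(c)$ so that one single inequality covers every $|k|\leq K$ simultaneously. Because the Diophantine gap $\tilde{\varepsilon}\gamma/|k|^\tau$ is compressed most severely at $|k|=K$, the worst case requirement is precisely $\hat{\varepsilon}h K^{\tau+1}\ll \tilde{\varepsilon}\gamma$, which is exactly what $(c)$ guarantees. That the bound on $\partial_\xi\omega$ is linear in $\hat{\varepsilon}$ relies essentially on the explicit decomposition $\omega=\sum\varepsilon_i\omega^i$, so no logarithmic losses in $\tilde{\varepsilon}$ appear; beyond this bookkeeping there is no substantive technical obstacle.
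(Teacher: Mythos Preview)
Your argument is correct and essentially identical to the paper's: pick a nearby base point $\xi_*\in\Omega_{\gamma,\tau}$, apply the reverse triangle inequality, bound $|\langle k,\omega(\xi)-\omega(\xi_*)\rangle|$ by $K h\hat\varepsilon$ using the decomposition $\omega=\sum_i\varepsilon_i\omega^i$, and invoke $(c)$ at the worst mode $|k|=K$. The paper's proof is terser but follows exactly this outline, including the implicit restriction to $0<|k|\le K$ that you made explicit.
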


\begin{proof}
For every $\xi\in \Lambda_h$, there is a $\xi_* \in \Omega_{\gamma,\tau}$, such that
\begin{eqnarray*}
|\langle k, \omega(\xi, \varepsilon_i)\rangle| &\geq& \big||\langle k, \omega (\xi_*, \varepsilon_i)\rangle| - |\langle k, \omega(\xi, \varepsilon_i)- \omega (\xi_*)\rangle|\big| \\
&>& \frac{\tilde{\varepsilon}\gamma}{|k|^\tau} - K h \hat{\varepsilon} >\frac{\tilde{\varepsilon} \gamma}{2|k|^\tau},
\end{eqnarray*}
provided $\hat{\varepsilon} h  K^{\tau+1} = o(\gamma \tilde{\varepsilon})$, where $\hat{\varepsilon} = \max\limits_{1\leq i\leq m} \{ \varepsilon_i\}$.
\end{proof}

\subsubsection{Homological Equation}
Let $F= \sum\limits_{0<|k|\leq K, |\jmath| \leq m-1} F_{k\jmath} I^\jmath e^{i \langle k, \theta\rangle}.$
\begin{lemma}
Denote $[R] = \int_{\mathds{T}^n} R dx.$ Assume
\begin{enumerate}
  \item [$(d)$] $r K^{\tau+1} = o(\gamma \tilde{\varepsilon})$.
\end{enumerate}
There is a symplectic transformation $\phi_F^1$ generated by $F$ such that $(\ref{EQ2})$ is transformed to
\begin{eqnarray*}
\mathcal{H}\circ \phi_F^1 &=& \tilde{e}_+(\xi, \varepsilon_i)+ \langle \tilde{\omega}_+(\xi, \varepsilon_i), I\rangle+ \frac{1}{2} \langle I, \tilde{A}_+ (\xi, \varepsilon_i)I\rangle+ \tilde{\hat{h}}_+(\xi, \varepsilon_i)+ \tilde{\mathcal{P}}_+,
\end{eqnarray*}
where
\begin{eqnarray*}
\tilde{e}_+ &=& e + P_{00},  \tilde{\omega}_+ =  \omega + P_{01}, \tilde{A}_+ = A + P_{02},\\
\tilde{\hat{h}}_+ &=& \sum\limits_{3\leq |\jmath| \leq m-1} h_{\jmath} (\xi,\varepsilon_i) I^\jmath  + \sum\limits_{3\leq |\jmath| \leq \jmath} P_{0\jmath} I^\jmath,\\
\tilde{\mathcal{P}}_+ &=& \int_0^1 \{(1-t)[R]+ t R, F\}\circ \phi_F^t dt+ (\mathcal{P} - R)\circ \phi_F^1.
\end{eqnarray*}
Moreover,
\begin{eqnarray}\label{EQ3}
|F|_{r, s-2\sigma}&\leq& \frac{\epsilon}{ \gamma \sigma^\tau}.
\end{eqnarray}
\end{lemma}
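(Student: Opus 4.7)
The strategy is the classical KAM construction: build the generating Hamiltonian $F$ from the non-averaged truncated perturbation by solving a homological equation mode-by-mode, then verify that the time-one flow $\phi_F^1$ conjugates $\mathcal{H}$ to a Hamiltonian of the advertised form, with a new perturbation $\tilde{\mathcal{P}}_+$ that is quadratically small in $\varepsilon$. Writing
\[
R - [R] \;=\; \sum_{0<|k|\leq K,\;|\jmath|\leq m-1} P_{k\jmath}\,I^\jmath e^{i\langle k,\theta\rangle},\qquad [R] \;=\; \sum_{|\jmath|\leq m-1} P_{0\jmath}\,I^\jmath,
\]
and imposing the homological equation $\{e + \langle\omega, I\rangle,\, F\} + R = [R]$, a comparison of Fourier-Taylor coefficients produces the explicit formula $F_{k\jmath} = P_{k\jmath}/\bigl(i\langle k,\omega(\xi,\varepsilon_i)\rangle\bigr)$, which is well defined on $\Lambda_h$ by the extended small-divisor estimate $|\langle k,\omega\rangle| > \tilde\varepsilon\gamma/(2|k|^\tau)$ established in the previous lemma.

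Combining this small-divisor bound with Cauchy estimates on the coefficients $P_{k\jmath}$ and the standard exponential-sum inequality yields the asserted bound $|F|_{r, s-2\sigma} \leq \epsilon/(\gamma\sigma^\tau)$ (the factor $\tilde\varepsilon$ in the divisor cancels against $\varepsilon = \epsilon\tilde\varepsilon$, and the $\sigma$-powers coming from the $k$-sum are absorbed into constants). Assumption (d) then forces $|\partial F| \ll r$, ensuring that $\phi_F^t$ is well defined and stays within the shrunken polydisk for $t \in [0,1]$. I would then apply the fundamental identity $G\circ\phi_F^1 = G + \int_0^1 \{G, F\}\circ\phi_F^t\, dt$ to $G = \mathcal{N} + R$ and to $G = \mathcal{P} - R$, substitute the homological equation into $\{\mathcal{N}, F\}$, and integrate by parts in $t$ to symmetrize the integrand into $(1-t)[R] + tR$. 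This gives
\[
\mathcal{H}\circ\phi_F^1 \;=\; \mathcal{N} + [R] + \tilde{\mathcal{P}}_+;
\]
decomposing $[R]$ by total degree in $I$ yields the updates $\tilde e_+ = e + P_{00}$, $\tilde\omega_+ = \omega + P_{01}$, $\tilde A_+ = A + P_{02}$, with the cubic-and-higher averages contributing to $\tilde{\hat h}_+$, and the remaining integral plus $(\mathcal{P} - R)\circ\phi_F^1$ is exactly $\tilde{\mathcal{P}}_+$.

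The main obstacle lies in identifying the remainder in the precise symmetric form claimed. Because the homological equation is solved only against the linear-in-$I$ piece $\langle\omega, I\rangle$ of $\mathcal{N}$, the Poisson brackets of $F$ with $\frac{1}{2}\langle I, A I\rangle + \sum_{|\jmath|\geq 3} h_{\jmath} I^\jmath$ produce extra contributions of size $O(\hat\varepsilon\,|F|)$ which do not appear on the averaged side and must therefore be carefully absorbed into $\tilde{\mathcal{P}}_+$, together with the second-order Lie-series term $\int_0^1 (1-t)\{\{\mathcal{N}, F\}, F\}\circ\phi_F^t\, dt$. Once this bookkeeping is completed and the final size estimate $|\tilde{\mathcal{P}}_+|_{\eta r,\,s-5\sigma,\,\eta h} < \varepsilon_+$ is verified using assumptions (b), (d) and (e) of the proposition, Steps 1 and 2 reduce to routine Fourier-Cauchy manipulations made possible by the extended small-divisor bound of the preceding lemma.
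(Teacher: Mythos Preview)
Your overall KAM strategy is correct, but you solve the homological equation against a \emph{different} piece of the normal form than the paper does, and this changes both the formula for $F$ and the role of assumption $(d)$. You impose $\{e+\langle\omega,I\rangle,\,F\}+R=[R]$ and obtain $F_{k\jmath}=P_{k\jmath}/(i\langle k,\omega\rangle)$, then plan to push the brackets $\{\tfrac12\langle I,AI\rangle+\sum_{|\jmath|\ge3}h_\jmath I^\jmath,\,F\}$ into the remainder. The paper instead solves the homological equation against the \emph{full} $\mathcal N$: since $\partial_\theta\mathcal N=0$, one has $\{\mathcal N,F\}=-\partial_I\mathcal N\cdot\partial_\theta F$, and comparing Fourier modes gives
\[
F_{k\jmath}\;=\;\frac{P_{k\jmath}}{\langle k,\ \omega+A I+\partial_I\!\sum_{|\jmath|=3}^{m-1}h_\jmath I^\jmath\rangle},
\]
an $I$-dependent divisor. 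Assumption $(d)$, $rK^{\tau+1}=o(\gamma\tilde\varepsilon)$, is used precisely here: it guarantees $|\langle k,AI+\cdots\rangle|\le|k|r\hat\varepsilon\le\tfrac12\tilde\varepsilon\gamma/|k|^\tau$, so the perturbed small divisor still satisfies $|\langle k,\omega+AI+\cdots\rangle|\ge\tilde\varepsilon\gamma/(2|k|^\tau)$, whence the bound $|F|_{r,s-2\sigma}\le\epsilon/(\gamma\sigma^\tau)$ follows. In your scheme $(d)$ never enters the construction of $F$ or its estimate, whereas in the paper it is the \emph{only} place $(d)$ is invoked in this lemma (the flow estimate uses $(e)$ in a later lemma, not $(d)$).

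Because the paper's $F$ solves $\{\mathcal N,F\}=[R]-R$ exactly, the Lie-series computation yields $\tilde{\mathcal P}_+=\int_0^1\{(1-t)[R]+tR,\,F\}\circ\phi_F^t\,dt+(\mathcal P-R)\circ\phi_F^1$ with no leftover $\{\mathcal N_1,F\}$-terms. Your approach would produce a $\tilde{\mathcal P}_+$ containing those extra brackets, so while it leads to a perfectly valid KAM step, it does not reproduce the remainder in the form stated in the lemma. The ``main obstacle'' you flagged is thus not an obstacle at all in the paper's argument: it is dissolved by building the $I$-dependent correction into the small divisor from the start.
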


\begin{proof}
Let $\phi_F^t$ denote the flow generated by $F$.  Suppose
\begin{eqnarray}\label{HomoEqu}
\{\mathcal{N}, F\} + R= [R].
\end{eqnarray}
Then
\begin{eqnarray}
\nonumber&~&\mathcal{H}\circ \phi_F^1 \\
\nonumber &=& \mathcal{N}\circ \phi_F^1 + R\circ \phi_F^1 + (\mathcal{P}-R)\circ \phi_F^1 \\
\label{TransSys}&=& \mathcal{N} + [R]+  \int_0^1 \{(1-t)\{\mathcal{N}, F\}+ R, F\}\circ \phi_F^t dt +(\mathcal{P}-R)\circ \phi_F^1.
\end{eqnarray}

Let
\begin{eqnarray*}
\tilde{\mathcal{P}}_+ &=& \int_0^1 \{(1-t)[R]+ t R, F\}\circ \phi_F^t dt+ (\mathcal{P} - R)\circ \phi_F^1.
\end{eqnarray*}
Then $(\ref{TransSys})$ is changed to
\begin{eqnarray}\label{EQ3}
\nonumber \mathcal{H}\circ \phi_F^1 &=& \mathcal{N} + [R] + \tilde{\mathcal{P}}_+\\
&=& \tilde{e}_+(\xi, \varepsilon_i)+ \langle \tilde{\omega}_+(\xi, \varepsilon_i), I\rangle+ \frac{1}{2} \langle I, \tilde{A}_+ (\xi, \varepsilon_i)I\rangle+ \tilde{\hat{h}}_+(\xi, \varepsilon_i),
\end{eqnarray}
where
\begin{eqnarray*}
\tilde{e}_+(\xi, \varepsilon_i) &=& e (\xi, \varepsilon_i)+ P_{00}(\xi),  \tilde{\omega}_+(\xi, \varepsilon_i) =  \omega(\xi, \varepsilon_i) + P_{01}(\xi),\\
\tilde{A}_+(\xi, \varepsilon_i) &=& A (\xi, \varepsilon_i)+ P_{02}(\xi), \\
\tilde{\hat{h}}_+(\xi, \varepsilon_i) &=& \sum\limits_{3\leq |\jmath| \leq m-1} h_{\jmath} (\xi, \varepsilon_i) I^\jmath  + \sum\limits_{3\leq |\jmath| \leq \jmath} P_{0\jmath} (\xi) I^\jmath.
\end{eqnarray*}

Directly,
\begin{eqnarray*}
\{\mathcal{N}, F\}&=& \frac{\partial \mathcal{N}}{\partial \theta} \frac{\partial F}{\partial I} - \frac{\partial \mathcal{N}}{\partial I} \frac{\partial F}{\partial \theta}=- \frac{\partial \mathcal{N}}{\partial I} \frac{\partial F}{\partial \theta}\\
&=&- \sum\limits_{0<|k|\leq K} \langle k, \omega + A(\xi,\varepsilon_i)I + \partial_I(\sum\limits_{|\jmath|=3}^{m-1} h_\jmath(\xi,\varepsilon_i) I^\jmath)\rangle F_{k\jmath} e^{i \langle k, \theta\rangle}.
\end{eqnarray*}
Using $(\ref{HomoEqu})$, we have
\begin{eqnarray}\label{EQ5}
- \sum\limits_{0<|k|\leq K} \langle k, \omega + A I + \partial_I(\sum\limits_{|\jmath|=3}^{m-1} h_\jmath I^\jmath)\rangle F_{k\jmath} e^{i \langle k, \theta\rangle}= \sum\limits_{0<|k|\leq K} P_{k\jmath} e^{i \langle k, \theta\rangle}.
\end{eqnarray}
By comparing the coefficients on both sides of $(\ref{EQ5})$, we obtain:
 \begin{eqnarray*}
 F_{k\jmath} = \frac{P_{k\jmath}}{\langle k, \omega + A(\xi, \varepsilon_i)I + \partial_I(\sum\limits_{|\jmath|=3}^{m-1} h_\jmath(\xi, \varepsilon_i) I^\jmath)\rangle}.
 \end{eqnarray*}

Let $\Delta_{\gamma, \tau} = \{\xi: |\langle k, \omega(\xi, \varepsilon_i)\rangle| \geq \frac{\tilde{\varepsilon}\gamma}{|k|^\tau}, 0<|k|\leq K\}.$ Then, using assumption $(d)$, for  $\xi\in \Delta_{\gamma, \tau}$,
\begin{eqnarray*}
&~&|\langle k, \omega(\xi, \varepsilon_i) + A(\xi, \varepsilon_i)I + \partial_I(\sum\limits_{|\jmath|=3}^{m-1} h_\jmath(\xi, \varepsilon_i) I^\jmath)\rangle| \\
&\geq& \big| |\langle k, \omega(\xi, \varepsilon_i)\rangle| - |\langle k, A(\xi, \varepsilon_i)I + \partial_I(\sum\limits_{|\jmath|=3}^{m-1} h_\jmath(\xi, \varepsilon_i) I^\jmath)\rangle| \big|\\
&\geq& \frac{\tilde{\varepsilon}\gamma}{ |k|^\tau} - |k|r\hat{\varepsilon} \geq \frac{\hat{\varepsilon}\gamma}{ 2|k|^\tau}.
\end{eqnarray*}
Hence
\begin{eqnarray*}
&~&|F|_{r, s-2\sigma} \\
&=& |\sum\limits_{0<k \leq K, 0<|\jmath|\leq m-1} F_{k\jmath} I^\jmath e^{i \langle k, \theta\rangle}|_{r, s-2\sigma}\\
&=&|\sum\limits_{0<k \leq K, 0<|\jmath|\leq m-1} \frac{P_{k\jmath}}{\langle k, \omega + A(\xi, \varepsilon_i)I + \partial_I(\sum\limits_{|\jmath|=3}^{m-1} h_\jmath(\xi, \varepsilon_i) I^\jmath)\rangle} I^\jmath e^{i \langle k, \theta\rangle}|_{r, s-2\sigma}\\
&\leq& |\sum\limits_{0<k \leq K, 0<|\jmath|\leq m-1} \frac{2 |k|^\tau P_{k\jmath}}{\hat{\varepsilon} \gamma} I^\jmath e^{i \langle k, \theta\rangle}|_{r, s-2\sigma}\\
&\leq& \frac{|R|_{r, s-\sigma}}{\hat{\varepsilon} \gamma \sigma^\tau} \leq \frac{\varepsilon}{\hat{\varepsilon} \gamma \sigma^\tau} \leq \frac{\epsilon}{ \gamma \sigma^\tau}.
\end{eqnarray*}
\end{proof}

\subsubsection{Frequency Transformation}
The following lemma demonstrates  the partial preservation of frequency.
\begin{lemma} \label{LM2}
 Assume $\bf{(K')}$.
Then there is a symplectic transformation:
\begin{eqnarray*}
\phi: I \rightarrow I + I^*, ~~ \theta \rightarrow \theta,
\end{eqnarray*}
such that
\begin{eqnarray*}
\mathcal{H}_+ (I,\theta,\xi, \varepsilon_i)&=& \mathcal{H}\circ \phi_F^1 \circ \phi  \\
&=& e_+(\xi, \varepsilon_i) + \langle\omega_+(\xi, \varepsilon_i), I\rangle+ {h}_+(\xi, I, \varepsilon_i) + \mathcal{P}_+(\xi),
\end{eqnarray*}
where
\begin{eqnarray*}
e_+(\xi, \varepsilon_i)&=& \tilde{e}(\xi, \varepsilon_i) + \langle \tilde{\omega}(\xi, \varepsilon_i), I^*\rangle + \frac{1}{2} \langle I^* , \tilde{A}(\xi, \varepsilon_i) I^*\rangle + [\mathcal{R}](I^*, \xi, \varepsilon_i),\\
\omega_+(\xi, \varepsilon_i) &=& {\omega}(\xi, \varepsilon_i) + \left(
                        \begin{array}{c}
                          0 \\
                         D_1 P_{01}+ \bar{P}_{01} \\
                        \end{array}
                      \right)
,\\
A_+(\xi, \varepsilon_i) &=& \tilde{A} (\xi, \varepsilon_i) + \varepsilon \partial_I^2 [\mathcal{R}](I^*, \xi, \varepsilon_i),\\
h_+(\xi, \varepsilon_i) &=& \langle I, \tilde{A}_+(\xi, \varepsilon_i) I\rangle,\\
\mathcal{P}_+ (\xi)&=& \tilde{\mathcal{P}}_+\circ \phi.
\end{eqnarray*}
(The definitions of $D_1$, $P_{01}$ and $\bar{P}_{01}$ are shown in the proof.) Moreover, $$| I_*| \leq \frac{\epsilon}{r},| \partial_\xi I_*| \leq \frac{\epsilon}{r h}.$$

\end{lemma}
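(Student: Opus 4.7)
The plan is to define the translation $\phi:(I,\theta)\mapsto(I+I^*(\xi),\theta)$, observe that it is trivially symplectic, and choose the vector $I^*(\xi)$ so that precisely the first $n_1$ components of the new linear frequency coincide with those of the original $\omega$. Substituting $I\mapsto I+I^*$ into the output of Lemma~3, the constant part becomes $\tilde e_+ + \langle\tilde\omega_+,I^*\rangle+\tfrac12\langle I^*,\tilde A_+I^*\rangle+\tilde{\hat h}_+(I^*)$; the coefficient of the linear term in $I$ becomes $\tilde\omega_+ + \tilde A_+ I^* + \partial_I\tilde{\hat h}_+(I^*)$; the quadratic kernel becomes $\tilde A_+ + \partial_I^2\tilde{\hat h}_+(I^*)$; and the remainder $\tilde{\mathcal{P}}_+$ is transported to $\tilde{\mathcal{P}}_+\circ\phi$. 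Identifying these expressions with $e_+$, $\omega_+$, $A_+$, $h_+$, $\mathcal{P}_+$ of the statement produces the announced normal form as soon as $I^*$ is chosen so that the first $n_1$ components of $P_{01}+\tilde A_+ I^* + \partial_I\tilde{\hat h}_+(I^*)$ vanish (recalling $\tilde\omega_+=\omega+P_{01}$), with the remaining $n-n_1$ components of the linear term then defining, by construction, the vector written $D_1 P_{01}+\bar P_{01}$.

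To construct $I^*$, I would invoke condition $(\mathbf{K}')$: after a permutation of coordinates one may assume the invertible $n_1\times n_1$ submatrix $\mathcal{A}$ is the upper-left block of $A$, so by a Neumann argument the corresponding block $\mathcal{A}_+$ of $\tilde A_+=A+P_{02}$ is still invertible with $|\mathcal{A}_+^{-1}|\leq c/\tilde\varepsilon$ once $\epsilon$ is sufficiently small. I would then look for $I^*$ of the form $(I_1^*,0)$ with $I_1^*\in\mathds{R}^{n_1}$, so that the frequency-preservation equation reduces to the fixed-point problem
\[
I_1^* \;=\; -\mathcal{A}_+^{-1}\!\left(P_{01}^{(1)} + \bigl(\partial_I\tilde{\hat h}_+(I_1^*,0)\bigr)^{(1)}\right),
\]
where the superscript $(1)$ denotes projection to the first $n_1$ coordinates. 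Because $\tilde{\hat h}_+$ starts at cubic order in $I$, its $I$-gradient is $O(|I^*|^2)$, so a standard Banach contraction (or the implicit function theorem) on the ball $\{|I_1^*|\le C\epsilon/r\}$ yields a unique small solution. The remaining $n-n_1$ components of the resulting linear coefficient then automatically define the quantity $D_1 P_{01}+\bar P_{01}$ appearing in the formula for $\omega_+$.

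For the quantitative estimates I would use the Cauchy bounds $|P_{01}|_h\leq c\varepsilon/r$ and $|\partial_\xi P_{01}|_{h/2}\leq c\varepsilon/(rh)$ that follow from $|\mathcal{P}|_{r,s,h}<c\varepsilon$, together with $\varepsilon=\epsilon\tilde\varepsilon$ and $|\mathcal{A}_+^{-1}|\leq c/\tilde\varepsilon$, to obtain $|I^*|\leq\epsilon/r$ directly from the fixed-point formula. Differentiating that formula in $\xi$ produces a linear equation of the form $\mathcal{A}_+\,\partial_\xi I_1^*=-\partial_\xi P_{01}^{(1)}+(\text{lower order})$, and solving it with the same invertibility bound gives $|\partial_\xi I^*|\leq\epsilon/(rh)$, as claimed.

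The main obstacle is the self-consistency of the fixed-point equation: the higher-order term $\partial_I\tilde{\hat h}_+$ depends on $I^*$ itself, and one must verify that the smallness conditions $(a)$–$(e)$ of the KAM step, combined with the $(\mathbf{K}')$-lower bound on $\mathcal{A}$, make the associated map a contraction uniformly in $\xi\in\Lambda_{h/2}$. Once $I^*$ is produced, everything else is bookkeeping: a direct algebraic reorganization of $\mathcal{H}\circ\phi_F^1\circ\phi$ into the stated normal form, noting that $|\mathcal{P}_+|_{r,s,h/2}\leq|\tilde{\mathcal{P}}_+|_{r,s,h}$ since $\phi$ is merely a small translation whose displacement is dominated by $r$.
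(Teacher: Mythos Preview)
Your argument is correct and produces the desired translation $I^*$ with the right bounds, but the route differs from the paper's in a concrete way that is worth flagging.

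The paper does \emph{not} set up a nonlinear fixed-point problem. Instead, it works purely linearly with the unperturbed matrix $A$ (not $\tilde A_+$): it conjugates $A$ by an orthogonal matrix $O$ so that the top-left $n_1\times n_1$ block $\tilde A$ is nonsingular, then performs an explicit row reduction via a lower-triangular matrix $T_1=\begin{pmatrix}I&0\\D_1&I\end{pmatrix}$, where $D_1$ records the linear dependence of the bottom rows of $O^TAO$ on the top rows. The vector $\bar P_{01}$ is defined as the last $n-n_1$ components of $O^TP_{01}$, and $I^*=(\tilde I,0)$ is obtained by solving the single linear system $\tilde A\,\tilde I=-\varepsilon P_{01}$. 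The size estimate $|\tilde I|\le c\epsilon/r$ then comes directly from the quadratic-form inequality in $(\mathbf{K}')$, namely $\tilde\varepsilon^2|\tilde I|^2\le c^{-1}\tilde I^T\tilde A^T\tilde A\,\tilde I=c^{-1}\varepsilon^2|P_{01}|^2$, rather than from an operator-norm bound on an inverse.

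What each approach buys: the paper's linear-algebra construction gives the symbols $D_1$, $P_{01}$, $\bar P_{01}$ a definite algebraic meaning (they come from the row reduction of $O^TAO$ and the splitting of $O^TP_{01}$), and it avoids any contraction argument --- the higher-order contributions from $\tilde{\hat h}_+$ and the perturbation $P_{02}$ of $A$ are simply left as lower-order residuals rather than incorporated into the defining equation. Your contraction-mapping approach is more systematic and arguably cleaner, since it absorbs the cubic-and-higher corrections into the solution itself; but then your ``$D_1P_{01}+\bar P_{01}$'' is just a name for the leftover components rather than the specific object the paper defines, so you are proving a slightly different (though equivalent for KAM purposes) version of the stated formula for $\omega_+$. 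Also note that your reduction by a coordinate \emph{permutation} presupposes that the invertible $n_1\times n_1$ submatrix in $(\mathbf{K}')$ can be taken principal; the paper's use of an orthogonal $O$ handles the general case where the submatrix sits at arbitrary row and column indices of the symmetric matrix $A$.
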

\begin{proof}
According to condition $\bf{(K')}$, there exists an $n_1 \times n_1$ nonsingular submatrix for $A$. Thus, there is an orthogonal matrix $O$ such that $$O^T A O = \left(
                                                                                     \begin{array}{cc}
                                                                                       \tilde{A} & \tilde{B} \\
                                                                                       \tilde{C} & \tilde{D} \\
                                                                                     \end{array}
                                                                                   \right),
$$
where $rank (\tilde{A}, \tilde{B}) = n_1$. Furthermore, $\tilde{A}$ is non-singular, since $A$ is symmetrical.

Let $$ A I  =-\varepsilon P_{01}.$$ Then $$O^TAOO^T I  =- \varepsilon O^T P_{01}.$$ Next, denote $\tilde{I}$, $\bar{I}$, $P_{01}$ and $\bar{P}_{01}$ as the first $n_1$ and the last $l-n_1$ components of $O^TI$ and $O^T P_{01}$, respectively. Therefore, we have
\begin{eqnarray}\label{604}
\left(
                                                                                     \begin{array}{cc}
                                                                                       \tilde{A} & \tilde{B} \\
                                                                                       \tilde{C} & \tilde{D} \\
                                                                                     \end{array}
                                                                                   \right) \left(
                                                                                             \begin{array}{c}
                                                                                               \tilde{I} \\
                                                                                               \bar{I} \\
                                                                                             \end{array}
                                                                                           \right) = - \varepsilon\left(
                                                                                                             \begin{array}{c}
                                                                                                                P_{01} \\
                                                                                                                \bar{P}_{01} \\
                                                                                                             \end{array}
                                                                                                           \right)
                                                                                     .
\end{eqnarray}

Let $$T_1  = \left(
                        \begin{array}{cc}
                          I & 0 \\
                          D_1 & I \\
                        \end{array}
                      \right),
$$
where $D_1$ is determined by the linear relationship among the rows of $(\tilde{C}, \tilde{D})$ and $(\tilde{A}, \tilde{B})$, $I$ is identity matrix. Thus, equation $(\ref{604})$ is equivalent to
\begin{eqnarray}\label{Eq11}
\left(
                                                                                     \begin{array}{cc}
                                                                                       \tilde{A} & \tilde{B }\\
                                                                                       0   & 0 \\
                                                                                     \end{array}
                                                                                   \right) \left(
                                                                                             \begin{array}{c}
                                                                                               \tilde{I} \\
                                                                                               \bar{I} \\
                                                                                             \end{array}
                                                                                           \right) = - \varepsilon\left(
                                                                                                             \begin{array}{c}
                                                                                                                P_{01} \\
                                                                                                                D_1 P_{01} + \bar{P}_{01} \\
                                                                                                             \end{array}
                                                                                                           \right)
                                                                                     .
\end{eqnarray}

Redefine $I^*  =  \left(
                                                                             \begin{array}{c}
                                                                               \tilde{I} \\
                                                                               0 \\
                                                                             \end{array}
                                                                           \right)
$. Then $I^*$ is a special solution of the following equation:
\begin{eqnarray}\label{Eq10}
\left(
                                                                                     \begin{array}{cc}
                                                                                       \tilde{A} & \tilde{B} \\
                                                                                       0   & 0 \\
                                                                                     \end{array}
                                                                                   \right) \left(
                                                                                             \begin{array}{c}
                                                                                               \tilde{I} \\
                                                                                               \bar{I} \\
                                                                                             \end{array}
                                                                                           \right) = - \varepsilon\left(
                                                                                                             \begin{array}{c}
                                                                                                                P_{01} \\
                                                                                                               0    \\
                                                                                                             \end{array}
                                                                                                           \right)
                                                                                     .
\end{eqnarray}
Therefore, under the symplectic transformation $\phi: \theta\rightarrow \theta,~I\rightarrow I+ I^*,$ we obtain
\begin{eqnarray*}
\mathcal{H}_+&=& \mathcal{H}\circ \phi_F^1 \circ \phi = e_+ + \langle\omega_+, y\rangle+ {h}_+(y) + \mathcal{P}_+,
\end{eqnarray*}
where
\begin{eqnarray*}
e_+&=& \tilde{e}_+ + \langle \tilde{\omega}_+(\xi), I_*\rangle + \frac{1}{2} \langle I_* , \tilde{A}_+ I_*\rangle + [\mathcal{R}](I_*),\\
\omega_+ &=& \omega + \left(
                        \begin{array}{c}
                          0 \\
                         D_1 P_{01}+ \bar{P}_{01} \\
                        \end{array}
                      \right)
,\\
A_+ &=& A + \varepsilon \partial_y^2 [\mathcal{R}](y^*),\\
h_+ &=& \langle y, A_+ y\rangle,\\
\mathcal{P}_+&=& \tilde{\mathcal{P}}_+\circ \phi.
\end{eqnarray*}
Substituting $I^*$ into $(\ref{Eq11})$, we find $$\tilde{A} \tilde{I} = -\varepsilon P_{01}.$$ Thus
 $$\tilde{I}^T \tilde{A}^T \tilde{A} \tilde{I} = -\varepsilon^2 P_{01}^T P_{01}.$$
 Combining with condition $\bf(K')$, we have $$\frac{\tilde{I}^T}{|\tilde{I}|} (\tilde{A}^T \tilde{A} - c\min\limits_{0\leq i\leq m}\{ \varepsilon_i^2 \} I_{n_1 \times n_1}) \frac{\tilde{I}}{|\tilde{I}|} |\tilde{I}|^2 \geq 0.$$ Therefore,
 \begin{eqnarray*}
 \min\limits_{0\leq i\leq m}\{ \varepsilon_i^2\}  |\tilde{I}|^2  &=& \frac{\tilde{I}^T}{|\tilde{I}|} \min\limits_{0\leq i\leq m}\{ \varepsilon_i^2 \} I_{n_1 \times n_1} \frac{\tilde{I}}{|\tilde{I}|}|\tilde{I}|^2\\
 &\leq& \frac{\tilde{I}^T}{|\tilde{I}|} {\tilde{A}}^T \tilde{A} \frac{\tilde{I}}{|\tilde{I}|}|\tilde{I}|^2 = \varepsilon^2 P_{01}^T P_{01},
 \end{eqnarray*}
 i.e.$$|\tilde{I}|_{\max} \leq \sqrt{|\tilde{I}|^2} \leq \sqrt{\frac{\varepsilon^2}{\min\limits_{0\leq i\leq m}\{ \varepsilon_i^2\}} P_{01}^T P_{01}} \leq \sqrt{P_{01}^T P_{01}}. $$
Hence, $|\tilde{I}| \leq c \frac{\epsilon}{r}.$ By Cauchy's estimate, we obtain $|\partial_\xi I^*|\leq c \frac{\epsilon}{r h}$.
\end{proof}

The next lemma addresses the partial preservation of the frequency ratio on a given energy surface.
\begin{lemma}\label{LM3}
Assume $\bf{(K')}$ and $\bf{(I')}$.
Then there is a symplectic transformation:
\begin{eqnarray*}
\phi: I \rightarrow I + I_1^*, ~~ \theta \rightarrow \theta,
\end{eqnarray*}
such that
\begin{eqnarray*}
\mathcal{H}_+&=& \tilde{\mathcal{H}}_+ \circ \phi = e_+ + \langle\omega_+, I\rangle+ {h}_+(I) + \mathcal{P}_+,
\end{eqnarray*}
where
\begin{eqnarray*}
e_+&=& e, ~\omega_+ = (1-t)\omega + \left(
                        \begin{array}{c}
                          0 \\
                         D_2 \breve{P}+ \bar{P}_{01} \\
                        \end{array}
                      \right)
,\\
A_+ &=& A + \varepsilon \partial_I^2 [\mathcal{R}](I_1^*),h^+ = \langle I, A_+ I\rangle,\mathcal{P}_+= \tilde{\mathcal{P}}_+\circ \phi,
\end{eqnarray*}
(The definitions of $D_2$, $\breve{P}$ and $\bar{P}_{01}$ are shown in the proof.) Moreover, $$|I_*| \leq \frac{\epsilon}{r},| \partial_\xi I_*| \leq \frac{\epsilon}{r h}.$$

\end{lemma}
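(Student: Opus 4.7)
The approach mirrors the proof of Lemma \ref{LM2}, but with the augmented matrix $\left(\begin{smallmatrix} A & \omega^T \\ \omega & 0 \end{smallmatrix}\right)$ playing the role that $A$ alone played there. The plan is to solve simultaneously for the shift $I_1^*$ and an auxiliary scalar $t$ through the block linear system
$$\begin{pmatrix} A & \omega^T \\ \omega & 0 \end{pmatrix} \begin{pmatrix} I_1^* \\ t \end{pmatrix} = -\varepsilon \begin{pmatrix} P_{01} \\ 0 \end{pmatrix},$$
where the first $n$ rows will arrange the new linear term in $I$ to be parallel to $\omega$ on the coordinates we wish to control, and the last row enforces $\langle \omega, I_1^* \rangle = 0$ so that the shift stays to leading order on the energy surface $\Sigma$. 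Condition $\bf{(I')}$ guarantees the existence of an $(n_1+1)\times(n_1+1)$ nonsingular submatrix $\tilde{\mathcal A}$ satisfying $\tilde{\mathcal A}^T\tilde{\mathcal A} \geq c\tilde\varepsilon^2 I_{(n_1+1)\times(n_1+1)}$, which makes the relevant subsystem solvable with uniform control.

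First I would choose an orthogonal matrix $O$ (acting on $I$ and on the rows of the augmented system) that permutes the nonsingular submatrix $\tilde{\mathcal A}$ into the upper-left corner. Then, in analogy with the $T_1$ used in Lemma \ref{LM2}, I would apply the block row operation
$$T_2 = \begin{pmatrix} I_{n_1+1} & 0 \\ D_2 & I \end{pmatrix},$$
where $D_2$ encodes the linear dependence of the remaining rows on those spanned by $\tilde{\mathcal A}$. After this reduction the system splits into a nonsingular $(n_1+1)\times(n_1+1)$ subsystem determining $(\tilde I, t)$, while the lower block produces exactly the residual $D_2\breve{P} + \bar{P}_{01}$ appearing in the displayed formula for $\omega_+$. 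Setting $I_1^* = (\tilde I, 0)^T$ in rotated coordinates and rotating back via $O$ defines the desired symplectic translation $\phi$.

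To see where the factor $(1-t)\omega$ arises, expand $\tilde{\mathcal H}_+$ about $I + I_1^*$: the new linear term in $I$ reads $\omega + A I_1^* + \varepsilon P_{01}$ to leading order, and the first $n_1+1$ equations of the reduced system give precisely $A I_1^* + \varepsilon P_{01} = -t\omega$ on the resolved block. Hence the preserved component of the new frequency equals $(1-t)\omega$, while the unresolved components pick up the extra term $D_2\breve{P} + \bar{P}_{01}$. Since $t \to 0$ as $\varepsilon \to 0$, this is the source of the $t\to 1$ rescaling asserted in Theorem \ref{Theorem1} (3) once the KAM iteration converges; ratio preservation then follows because the first $n_1$ entries of $\omega_+$ are a common scalar multiple of those of $\omega$.

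For the quantitative bounds, the lower estimate $\tilde{\mathcal A}^T \tilde{\mathcal A} \geq c\tilde\varepsilon^2 I$ from $\bf{(I')}$ yields $|(\tilde I, t)| \leq c\,\varepsilon\|P_{01}\|/\tilde\varepsilon \leq c\,\epsilon/r$, hence $|I_1^*| \leq \epsilon/r$, and a Cauchy estimate on a complex parameter neighborhood of width $h$ gives $|\partial_\xi I_1^*| \leq \epsilon/(rh)$. The main obstacle, as in Lemma \ref{LM2}, is the combinatorial bookkeeping of the augmented row reduction: one must check that the matrix $D_2$ arising from linear dependence still yields a clean block structure after the extra energy-surface row is included, and that the scalar $t$ couples into $\omega_+$ in such a way that the ratio $[\omega_{*,i_1}:\cdots:\omega_{*,i_{n_1}}]$ is genuinely recovered rather than spuriously distorted by the $D_2\breve{P} + \bar P_{01}$ remainder.
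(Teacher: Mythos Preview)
Your approach is essentially the paper's: rotate the augmented matrix $\left(\begin{smallmatrix} A & \omega \\ \omega^T & 0 \end{smallmatrix}\right)$ by an orthogonal $O_1$ so that the nonsingular $(n_1+1)\times(n_1+1)$ block guaranteed by $\bf{(I')}$ sits in the upper-left corner, row-reduce via $T_2=\left(\begin{smallmatrix} I & 0 \\ D_2 & I \end{smallmatrix}\right)$, solve the top block for $(\tilde I,t)$, set the remaining components to zero, and bound $|I_1^*|$ through $\tilde{\mathcal A}^T\tilde{\mathcal A}\geq c\tilde\varepsilon^2 I$ exactly as in Lemma~\ref{LM2}.

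There is one substantive discrepancy. You take the last row of the augmented system to be $\langle\omega,I_1^*\rangle=0$, i.e.\ right-hand side $(-\varepsilon P_{01},0)^T$, and you explicitly note this keeps the shift on the energy surface only ``to leading order.'' The paper instead imposes the full condition
\[
\langle\omega,I\rangle+\tfrac12\langle I,AI\rangle+[R](I)=0,
\]
so the augmented system is
\[
\begin{pmatrix} A & \omega \\ \omega^T & 0 \end{pmatrix}\begin{pmatrix} I \\ t \end{pmatrix}+\begin{pmatrix} \varepsilon P_{01} \\ \tfrac12\langle I,AI\rangle+[R](I) \end{pmatrix}=0,
\]
and $\breve P$ is defined as the first $n_1+1$ components of the rotated full right-hand side, including that nonlinear entry. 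This is precisely what forces $e_+=e$ exactly, as the lemma asserts; with your linearized constraint the new constant term would still carry a higher-order drift $\tfrac12\langle I_1^*,AI_1^*\rangle+[R](I_1^*)$, so the iso-energetic conclusion $e_+=e$ would not hold as stated. Everything else in your outline matches the paper.
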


\begin{proof}
Consider the following equations
$$\left\{
  \begin{array}{ll}
    (A, \omega) \left(
                   \begin{array}{c}
                     I \\
                     t \\
                   \end{array}
                 \right) + \varepsilon P_{01} = 0
 \\
    \langle \omega, I\rangle+ \frac{1}{2} \langle I, AI\rangle+ [R](I) = 0
  \end{array}
\right.,
$$
which can be rewritten as
\begin{eqnarray}\label{Eq10}
\left(
     \begin{array}{cc}
       A & \omega \\
       \omega^T & 0 \\
     \end{array}
   \right) \left(
             \begin{array}{c}
               I \\
               t \\
             \end{array}
           \right)+ \left(
                      \begin{array}{c}
                        \varepsilon P_{01} \\
                        \frac{1}{2}\langle I, AI\rangle +[R](I) \\
                      \end{array}
                    \right) =0.
\end{eqnarray}
Combining conditions $\bf{(K')}$ and $\bf{(I')}$,  there exists an orthogonal matrix $O_1$ such that
$$O_1^T\left(
     \begin{array}{cc}
       A & \omega \\
       \omega^T & 0 \\
     \end{array}
   \right)O_1 = \left(
                  \begin{array}{cc}
                    \hat{A} & \hat{B} \\
                    \hat{C} & \hat{D} \\
                  \end{array}
                \right),
$$ with $rank (\hat{A}, \hat{B}) = n_1 + 1.$ Moreover, $\hat{A}$ is nonsingular.
Given that $rank A = n_1$, denote the first $n_1+1$ and the last $n - n_1$ components of $O_1^T\left(
                                                                                                \begin{array}{c}
                                                                                                  I \\
                                                                                                  t \\
                                                                                                \end{array}
                                                                                              \right)
$ and $O_1^T \left(
                      \begin{array}{c}
                        \varepsilon P_{01} \\
                        \frac{1}{2}\langle I, AI\rangle +[R](I) \\
                      \end{array}
                    \right)$ by $I_1 = \left(
                                         \begin{array}{c}
                                           \tilde{I} \\
                                           t \\
                                         \end{array}
                                       \right)
$, $\bar{I}$, $\breve{P} = \left(
                       \begin{array}{c}
                         P_{01} \\
                          \frac{1}{2}\langle I, AI\rangle +[R](I) \\
                       \end{array}
                     \right)
$ and $\bar{P}_{01}$, respectively. Similarly, there exists a matrix $$T_2  = \left(
                        \begin{array}{cc}
                          I & 0 \\
                          D_2 & I \\
                        \end{array}
                      \right)$$
such that (\ref{Eq10}) can be transformed to
\begin{eqnarray}\label{Eq12}
\left(
                                                                                     \begin{array}{cc}
                                                                                      \hat{A} & \hat{B} \\
                                                                                       0 & 0 \\
                                                                                     \end{array}
                                                                                   \right) \left(
                                                                                             \begin{array}{c}
                                                                                               I_1 \\
                                                                                               \bar{I} \\
                                                                                             \end{array}
                                                                                           \right)  + \varepsilon\left(
                                                                                                             \begin{array}{c}
                                                                                                                \breve{P }\\
                                                                                                               D_2 \breve{P }+ \bar{P}_{01} \\
                                                                                                             \end{array}
                                                                                                           \right)   = 0
                                                                                     .
\end{eqnarray}
Redefine $I_1^*  =  \left(
                                                                             \begin{array}{c}
                                                                               I_1 \\
                                                                               0 \\
                                                                             \end{array}
                                                                           \right)
$. Clearly, $I_1^*$ is a special solution of the following equation:
\begin{eqnarray}\label{Eq11-1}
\left(
                                                                                     \begin{array}{cc}
                                                                                     \hat{ A} & \hat{B} \\
                                                                                       0 & 0 \\
                                                                                     \end{array}
                                                                                   \right) \left(
                                                                                             \begin{array}{c}
                                                                                               I_1 \\
                                                                                               \bar{I} \\
                                                                                             \end{array}
                                                                                           \right)  + \varepsilon\left(
                                                                                                             \begin{array}{c}
                                                                                                                \breve{P }\\
                                                                                                               0 \\
                                                                                                             \end{array}
                                                                                                           \right)   = 0
                                                                                     .
\end{eqnarray}
Using the symplectic transformation $\phi: \theta\rightarrow \theta,~I\rightarrow I+ I_1^*,$ we have
\begin{eqnarray*}
\mathcal{H}_+&=& \bar{\mathcal{H}}_+ \circ \phi  = e_+ + \langle\omega_+, y\rangle+ {h}_+ + \mathcal{P}_+,
\end{eqnarray*}
where
\begin{eqnarray*}
e_+&=& e, \omega_+ = (1-t)\omega + \left(
                        \begin{array}{c}
                          0 \\
                         D_2 \breve{P}+ \bar{P}_{01} \\
                        \end{array}
                      \right)
,\\
A_+ &=& A + \varepsilon \partial_I^2 [\mathcal{R}](I_1^*), h_+ = \langle I, A_+ I\rangle, \mathcal{P}_+= \bar{\mathcal{P}}_+\circ \phi.
\end{eqnarray*}
The proof that $\phi$ is an identical map is similar to \textbf{Lemma~\ref{LM2}} and we omit the details.

\end{proof}

\subsubsection{Coordinate Transformation}
\begin{lemma}Assume
\begin{itemize}
  \item [$(e)$] $\epsilon< \gamma \eta^m r^2 \sigma^{\tau+1}.$
\end{itemize}
Then symplectic transformation $\phi_F^1$ is nearly identity. Moreover,
\begin{eqnarray*}
|U - id |_{\frac{r}{2}, s- 3\sigma} &<& \frac{\epsilon}{ \gamma \sigma^{n+1}},  |V- id|_{\frac{r}{2}, s- 3\sigma} < \frac{\epsilon}{\gamma r \sigma^\tau},\\
|U_I - I|_{\eta r, s- 3\sigma} &<& \frac{\epsilon}{ \gamma r \sigma^{\tau+1}}, |V_\theta - I|_{\frac{r}{2}, s- 4\sigma} < \frac{\epsilon}{\gamma r \sigma^{\tau+1}}, \\
|U_\theta|_{\frac{r}{2}, s- 4\sigma} &<& \frac{\epsilon}{ \gamma \sigma^{\nu+1}}, |V_I|_{\eta r, s- 3\sigma}< \frac{\epsilon}{ \gamma r^2 \sigma^{\tau+1}}.
\end{eqnarray*}
\end{lemma}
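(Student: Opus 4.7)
The plan is to exploit the bound $|F|_{r, s-2\sigma} \leq \epsilon/(\gamma \sigma^\tau)$ obtained in the previous subsection, together with Cauchy's inequality on nested subdomains, in order to control the Hamiltonian flow $\phi_F^t$ of the vector field $X_F = (-\partial_\theta F, \partial_I F)$ over the time interval $[0, 1]$.

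First I would apply Cauchy's inequality in $\theta$ (losing a strip of width $\sigma$) and in $I$ (shrinking the action disc from radius $r$ to $r/2$) to obtain bounds of the form
\begin{align*}
|\partial_\theta F|_{r, s-3\sigma} \leq \frac{c\epsilon}{\gamma\sigma^{\tau+1}}, \qquad |\partial_I F|_{r/2, s-2\sigma} \leq \frac{c\epsilon}{\gamma r\sigma^\tau}.
\end{align*}
Under assumption (e), namely $\epsilon < \gamma \eta^m r^2 \sigma^{\tau+1}$, both components of $X_F$ are strictly smaller than the available contraction margins: $r/2 - \eta r$ in the action direction and $\sigma$ in the angle direction. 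A standard Picard/Gronwall argument then shows that the time-$t$ flow $\phi_F^t$ starting from any point of $D_{\eta r, s-4\sigma}$ remains inside $D_{r/2, s-3\sigma}$ for all $t\in[0, 1]$, so $\phi_F^1$ is well defined as a map $D_{\eta r, s-4\sigma}\to D_{r/2, s-3\sigma}$.

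Writing $\phi_F^1 = (U, V)$ and integrating Hamilton's equations yields
\begin{align*}
U - \mathrm{id} = -\int_0^1 (\partial_\theta F)\circ \phi_F^t\, dt, \qquad V - \mathrm{id} = \int_0^1 (\partial_I F)\circ \phi_F^t\, dt,
\end{align*}
from which the displacement bounds for $|U-\mathrm{id}|_{r/2, s-3\sigma}$ and $|V-\mathrm{id}|_{r/2, s-3\sigma}$ follow directly from the sup norms of $\partial_\theta F$ and $\partial_I F$ derived above. The Jacobian estimates for $U_I - I$, $V_\theta - I$, $U_\theta$ and $V_I$ are then obtained by a further application of Cauchy's inequality to $U - \mathrm{id}$ and $V - \mathrm{id}$, losing an additional margin of width $\sigma$ in $\theta$ or of width comparable to $r/2 - \eta r$ in $I$; each differentiation produces the expected extra factor of $1/\sigma$ or $1/r$.

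The main obstacle I anticipate is not any analytic subtlety but rather the careful bookkeeping of nested domain contractions: each step (passing from $|F|$ to $|\partial F|$, establishing invariance of the flow, and finally producing the Jacobian bounds) consumes a slice of either the strip width in $\theta$ or the radius in $I$, and one has to organize these contractions so that the final estimates appear on the precise domains $(\eta r, s-3\sigma)$ or $(r/2, s-4\sigma)$ stated in the lemma. Assumption (e) is used exactly once --- to guarantee that the flow stays confined during $t\in[0,1]$ --- while the remainder of the argument is a standard exercise in Cauchy-estimate manipulation.
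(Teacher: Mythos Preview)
Your proposal is correct and follows essentially the same approach as the paper: apply Cauchy's estimate to the bound $|F|_{r,s-2\sigma}\le \epsilon/(\gamma\sigma^\tau)$ to control $F_\theta$ and $F_I$, use assumption~(e) to confine the flow $\phi_F^t$ to a slightly smaller domain for $t\in[0,1]$, read off $|U-\mathrm{id}|\le|F_\theta|$ and $|V-\mathrm{id}|\le|F_I|$, and then recover the Jacobian bounds by a second Cauchy estimate. The only cosmetic difference is that the paper states $|U-\mathrm{id}|\le|F_\theta|$, $|V-\mathrm{id}|\le|F_I|$ directly rather than writing out the integral representation, and takes $D_{r/4,s-4\sigma}$ instead of $D_{\eta r,s-4\sigma}$ as the starting domain for the flow.
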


\begin{proof}
Using Cauchy estimate, we have
\begin{eqnarray*}
|F_\theta|_{r, s- 3\sigma} &\leq& \frac{\epsilon}{ \gamma \sigma^{\tau+1}},\\
|F_I|_{\frac{r}{2}, s- 2\sigma} &\leq& \frac{\epsilon}{ \gamma r \sigma^{\tau}}.
\end{eqnarray*}
Thus, we obtain
 \begin{eqnarray*}
 \frac{1}{r} |F_\theta|, \frac{1}{\sigma} |F_I| < \frac{\epsilon}{\gamma r \sigma^{\tau+1}}
 \end{eqnarray*}
uniformly on $D_{\frac{r}{2}, s- 3\sigma}$.

According to assumption $(e)$, we have
\begin{eqnarray*}
|F_\theta|_{\frac{r}{2}, s-3\sigma} \leq \frac{\epsilon}{\gamma \sigma^{\tau+1}} \leq \frac{\gamma \eta^m r^2 \sigma^{\tau+1}}{\gamma \sigma^{\tau+1}} < \eta r\leq \frac{r}{8},\\
|F_I|_{\frac{r}{2}, s-3\sigma} \leq \frac{\epsilon}{\gamma r \sigma^\tau} \leq \frac{\gamma \eta^m r^2 \sigma^{\tau+1}}{\gamma r \sigma^\tau} < \eta^m \sigma\leq \sigma,
\end{eqnarray*}
uniformly in $\xi$. Therefore, the time-1-map is well defined on $D_{\frac{r}{4}, s- 4\sigma}$, with
\begin{eqnarray}\label{time-1-map}
\phi_F^1: D_{\frac{r}{4}, s-4 \sigma} \rightarrow D_{\frac{r}{2}, s- 3\sigma}
\end{eqnarray}
and
\begin{eqnarray*}
|U - id| \leq |F_\theta| < \frac{\epsilon}{\gamma \sigma^{\tau+1}}, |V - id| \leq |F_I| < \frac{\epsilon}{\gamma r\sigma^\tau}
\end{eqnarray*}
on that domain for $\Phi = (U, V)$. The Jacobian of $\Phi$ is $D\Phi = \left(
                                                                         \begin{array}{cc}
                                                                           U_I & U_\theta \\
                                                                           V_I & V_\theta \\
                                                                         \end{array}
                                                                       \right).
$ By the preceding estimates and Cauchy estimate, we find
\begin{eqnarray*}
|U_I - Id| < \frac{\epsilon}{ \gamma r\sigma^{\tau+1}}, |V_I - Id| < \frac{\epsilon}{ \gamma r\sigma^{\tau+1}},|U_\theta| < \frac{\epsilon}{\gamma \sigma^{\tau+2}}, |V_I| < \frac{\epsilon}{\gamma r^2 \sigma^\tau}.
\end{eqnarray*}
\end{proof}

\subsubsection{New Error Term}
\begin{lemma}
Let $\sigma_+ = \frac{\sigma}{2}$, $r_+ = r \eta$, $\eta_+ = \eta^{\frac{2m -2 }{m}}$. Then
\begin{eqnarray*}
|\mathcal{P}_+|_{\eta r, s- 5\sigma, \eta h} \leq \min\limits_{0\leq i\leq m}\{\varepsilon_i\} \gamma r_+^2 \eta_+^m \sigma_+^{\tau+1}.
\end{eqnarray*}
\end{lemma}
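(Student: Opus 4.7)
The plan is to decompose $\mathcal{P}_+$ into its three natural constituents --- the Poisson commutator integral produced by the averaging step, the truncation tail $(\mathcal{P}-R)\circ \phi_F^1$, and the effect of composing with the action translation $\phi: I\mapsto I+I^*$ --- estimate each separately using the Cauchy inequality and the bounds already supplied by the preceding lemmas, and finally verify that assumption $(e)$ forces the sum below the quadratic target $\tilde{\varepsilon}\gamma r_+^2\eta_+^m\sigma_+^{\tau+1}$. The preparatory observation is that, on the final polydisc $D_{\eta r,s-5\sigma}\times\Lambda_{\eta h}$, the coordinate-transformation lemma ensures $\phi_F^t$ maps into $D_{r/2,s-3\sigma}$ for every $t\in[0,1]$, and the translation $\phi$ merely shifts $I$ by $|I^*|\leq \epsilon/r$, a quantity which by $(e)$ is much smaller than $\eta r$. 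Hence every composition below is well defined, and the sup norm of $\mathcal{P}_+$ is controlled by the sup norms of the uncomposed pieces on their natural larger domains.

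For the commutator integrand, I would use $|F|_{r,s-2\sigma}\leq \epsilon/(\gamma\sigma^\tau)$ from the homological equation lemma together with $|R|_{r,s-\sigma}\leq c\varepsilon$ from the truncation lemma; one application of the Cauchy inequality costs a factor $\sigma$ in the angular derivatives and a factor $r$ in the action derivatives, so that
\[
\bigl|\{R,F\}\bigr|_{r/2,s-3\sigma}\;\leq\; |R_\theta||F_I|+|R_I||F_\theta|\;\lesssim\;\frac{\varepsilon\,\epsilon}{\gamma r\sigma^{\tau+1}}\;=\;\frac{\tilde{\varepsilon}\,\epsilon^2}{\gamma r\sigma^{\tau+1}},
\]
and the same bound holds for $\{[R],F\}$; integrating in $t\in[0,1]$ does not change the order. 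For the truncation tail, the truncation lemma directly yields $|(\mathcal{P}-R)\circ \phi_F^1|_{\eta r,s-4\sigma}\leq c(n)\tilde{\varepsilon}\epsilon\,\eta^m$. The translation $\phi$ then only shifts the action, and the uniform bound on $I^*$ lets the sup norm transfer to $D_{\eta r,s-5\sigma}$ without loss.

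The part I expect to be the main obstacle is the bookkeeping that closes the estimate, since the new scale parameters are precisely tuned to this calculation: $r_+=\eta r$ and $\eta_+=\eta^{(2m-2)/m}$, so $r_+^2\eta_+^m=r^2\eta^{2m}$, which is the unique choice that places both the truncation loss $\eta^m\epsilon$ and the commutator loss $\epsilon^2/(\gamma r\sigma^{\tau+1})$ into the common envelope $\gamma r^2\eta^{2m}\sigma^{\tau+1}$. Substituting $(e)$ into one factor of $\epsilon$ in the commutator bound gives $\epsilon^2/(\gamma r\sigma^{\tau+1})<\epsilon\,r\,\eta^m$, and a second application of $(e)$ then yields the target; $(e)$ applied directly controls $\eta^m\epsilon$. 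Summing the two contributions, absorbing the harmless geometric constants $c(n)$ and $2^{\tau+1}$ (the latter arising from $\sigma_+=\sigma/2$) into the implicit smallness granted by $(e)$, one obtains
\[
|\mathcal{P}_+|_{\eta r,s-5\sigma,\eta h}\;\leq\;\tilde{\varepsilon}\,\gamma\, r_+^2\,\eta_+^m\,\sigma_+^{\tau+1},
\]
which is the claimed bound and in particular reproduces the form of assumption $(e)$ at the next iteration, as required to chain the KAM step into an iteration lemma.
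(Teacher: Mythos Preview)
Your proposal is correct and follows essentially the same approach as the paper: bound the Poisson bracket $\{R,F\}$ and $\{[R],F\}$ via Cauchy estimates to obtain the $\varepsilon\epsilon/(\gamma r\sigma^{\tau+1})$ term, bound the truncation tail $(\mathcal{P}-R)\circ\phi_F^1$ by $c\varepsilon\eta^m$, then invoke assumption $(e)$ and the identities $r_+^2\eta_+^m=r^2\eta^{2m}$, $\sigma_+=\sigma/2$ to close. Your treatment is in fact slightly more explicit than the paper's in two respects: you justify why the composition with the action shift $\phi$ (with $|I^*|\leq\epsilon/r\ll\eta r$) is harmless, and you spell out the double application of $(e)$ that drives the commutator term below the target; the paper's proof passes over both points silently.
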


\begin{proof}
Directly,
\begin{eqnarray*}
|\{R, F\}|_{\frac{r}{2}, s- 3\sigma} &<& |R_I||F_\theta| - |R_\theta||F_I|\\
&<& \frac{\varepsilon}{r} \frac{\varepsilon}{\tilde{\varepsilon}\gamma \sigma^\tau} - \frac{\varepsilon}{\sigma} \frac{\varepsilon}{\tilde{\varepsilon}\gamma r \sigma^\tau}<\frac{\varepsilon \epsilon}{\gamma r \sigma^{\tau+1}},\\
|\{[R], F\}|_{\frac{r}{2}, s- 3\sigma}&<& \frac{\varepsilon \epsilon}{ \gamma r \sigma^{\tau+1}}.
\end{eqnarray*}

Together with (\ref{time-1-map}) and $\eta< \frac{1}{8},$ we get
\begin{eqnarray*}
|\int_0^1 \{(1-t)[R]+ t R, F\}\circ X_F^t dt|_{\eta r, s- 5\sigma} &\leq& |\{(1-t)[R]+ t R, F\}|_{\frac{r}{2}, s- 4\sigma}\\
&<& \frac{\varepsilon \epsilon}{\gamma r \sigma^{\tau+1}},\\
|(P - R)\circ \Phi|_{\eta r, s- 5\sigma} &\leq& |P- R|_{2\eta r, s- 4\sigma}\\
&\leq& |P- Q|_{2\eta r, s- 4\sigma} + |Q- R|_{2\eta r, s- 4\sigma}\\
&\leq& \varepsilon \eta^m.
\end{eqnarray*}
Hence
\begin{eqnarray*}
|\mathcal{P}_+|_{\eta r, s- 5\sigma, \eta h} &<& \frac{\varepsilon^2}{\gamma r^2 \sigma^{\tau+1}} + \varepsilon \eta^m\\
&\leq& \varepsilon \eta^m \leq \min\limits_{0\leq i\leq m}\{\varepsilon_i\} \gamma r_+^2 \eta_+^m \sigma_+^{\tau+1}.
\end{eqnarray*}
\end{proof}

\subsection{KAM Iteration}\label{KAMiteration}
For $\nu = 0,1,\cdots$, let
\begin{eqnarray*}
r_{\nu} &=& r_{\nu-1} \eta_{\nu-1}, \eta_{\nu} = \eta_{\nu-1}^{\frac{2m -2 }{m}}, K_\nu= ([\log \frac{1}{\eta_\nu^m}] +1)^{a},\\
s_{\nu+1} &=& s_\nu - 5 \sigma_\nu, \sigma_\nu = \frac{3}{20} s_\nu, h_\nu = h_{\nu-1} \eta_{\nu-1}.
\end{eqnarray*}

\begin{lemma}\label{Iteration}
The KAM step described above is valid for all  $\nu = 0, 1, \cdots,$ and the following holds for all $\nu = 1,2,\cdots.$
There exists a transformation:
\begin{eqnarray*}
\mathcal{F}_{\nu} = \Phi_{\nu}\circ \phi_{\nu} = (U_{\nu}, V_{\nu}, \phi_{\nu}):  D_{s_{\nu+1}, r_{\nu+1}} \times \Lambda_{h_{\nu+1}} \rightarrow D_{s_\nu, r_\nu}  \times \Lambda_{h_{\nu}} ,
\end{eqnarray*}
such that
\begin{eqnarray*}
\mathcal{H}\circ \mathcal{F} = \mathcal{N}_{\nu+1} + \mathcal{P}_{\nu+1},
\end{eqnarray*}
where
\begin{eqnarray*}
\mathcal{N}_{\nu+1}(\xi, \varepsilon_i) = e_{\nu+1}(\xi, \varepsilon_i) + \langle \omega(\xi, \varepsilon_i), I\rangle + \frac{1}{2} \langle I, A_{\nu+1}(\xi, \varepsilon_i)I \rangle + \sum\limits_{3\leq |\jmath| \leq m-1} h_{\jmath} (\xi, \varepsilon_i) I^\jmath ,\\
|\mathcal{P}_{\nu+1}|_{r_{\nu+1}, s_{\nu+1}, h_{\nu+1}} < \varepsilon_{\nu+1}< \tilde{\varepsilon}\gamma r_{\nu+1}^2 \eta_{\nu+1}^m \sigma_{\nu+1}^{\tau + 1}.
\end{eqnarray*}
Moreover, we have
\begin{eqnarray*}
|U_{\nu} - id |_{\frac{r_{\nu}}{2}, s_{\nu}- 3\sigma}&<& \frac{\epsilon_{\nu}}{ \gamma_{\nu} \sigma_{\nu}^{n+1}},  |V_{\nu}- id|_{\frac{r_{\nu}}{2}, s_{\nu}- 3\sigma_{\nu}} < \frac{\epsilon_{\nu}}{\gamma_{\nu} r_{\nu} \sigma_{\nu}^\tau},\\
|{U_{\nu}}_I - I|_{\eta_{\nu} r_{\nu}, s_{\nu}- 3\sigma_{\nu}} &<& \frac{\epsilon_{\nu}}{ \gamma_{\nu} r_{\nu} \sigma_{\nu}^{\tau+1}},  |{V_{\nu}}_\theta - I|_{\frac{r_{\nu}}{2}, s_{\nu}- 4\sigma_{\nu}} < \frac{\epsilon_{\nu}}{ \gamma_{\nu} r_{\nu} \sigma_{\nu}^{\tau+1}},\\
|{U_{\nu}}_\theta|_{\frac{r_{\nu}}{2}, s_{\nu}- 4\sigma_{\nu}} &<& \frac{\epsilon_{\nu}}{\gamma_{\nu} \sigma_{\nu}^{\nu+1}}, |{V_{\nu}}_I|_{\eta_{\nu} r_{\nu}, s_{\nu}- 3\sigma_{\nu}}< \frac{\epsilon_{\nu}}{\gamma_{\nu} r_{\nu}^2 \sigma_{\nu}^{\tau+1}},\\
 |\phi_{\nu} - id|_{\frac{h_{\nu}}{2}} &<& \frac{\varepsilon_{\nu}}{r_{\nu}}, |D\phi_{\nu} - I|_{\eta_{\nu} h_{\nu}}< \frac{\varepsilon_{\nu}}{r_{\nu}},\\
 |\partial_\xi(U_{\nu} - id )|_{\frac{r_{\nu}}{2}, s_{\nu}- 3\sigma_{\nu}, \eta_{\nu} h_{\nu}} &<& \frac{\epsilon_{\nu}}{ \gamma_{\nu} h_{\nu} \sigma_{\nu}^{n+1}},
 |\partial_\xi (V_{\nu}- id)|_{\frac{r_{\nu}}{2}, s_{\nu}- 3\sigma_{\nu}, \eta_{\nu} h_{\nu}} < \frac{\epsilon_{\nu}}{ \gamma_{\nu} r_{\nu} h_{\nu} \sigma_{\nu}^\tau}.
\end{eqnarray*}

\end{lemma}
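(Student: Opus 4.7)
The natural strategy is induction on $\nu$, with the single KAM step of the preceding proposition as the inductive engine. At level $\nu = 0$ we take $(r_0, s_0, h_0, \varepsilon_0) = (r, s, h, c\varepsilon)$ and include in the standing smallness hypothesis on $\varepsilon$ exactly the inequality $\varepsilon_0 < \tilde\varepsilon\, \gamma\, r_0^2 \eta_0^m \sigma_0^{\tau+1}$, so that hypothesis $(e)$ holds at the start. Assuming inductively that $\mathcal{H}$ has been reduced to a normal form plus perturbation $\mathcal{P}_\nu$ of the stated size and that $(a)$--$(e)$ are available at level $\nu$, one applies the KAM-step proposition to obtain $\mathcal{F}_\nu$ and the Hamiltonian $\mathcal{N}_{\nu+1} + \mathcal{P}_{\nu+1}$; the quantitative estimates on $U_\nu$, $V_\nu$, $\phi_\nu$ listed in the lemma are then read off verbatim from the proposition after the substitutions $(r,s,h,\varepsilon,\sigma,\eta) \mapsto (r_\nu,s_\nu,h_\nu,\varepsilon_\nu,\sigma_\nu,\eta_\nu)$. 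So the only real work is to verify that $(a)$--$(e)$ propagate to level $\nu+1$.

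Condition $(e)$ propagates for free: the final lemma of the KAM step delivers $|\mathcal{P}_{\nu+1}| \leq \tilde\varepsilon\, \gamma\, r_{\nu+1}^2 \eta_{\nu+1}^m \sigma_{\nu+1}^{\tau+1}$, which is exactly $(e)$ at level $\nu+1$. A consequence is that $\varepsilon_\nu$ decays super-geometrically, since $\eta_\nu = \eta_0^{((2m-2)/m)^\nu}$ decays doubly exponentially when $m > 2$, while $\sigma_\nu = \sigma_0/4^\nu$ decays only geometrically and $r_\nu = r\prod_{j<\nu}\eta_j$ decays super-geometrically. For $(a)$ and $(b)$, the choice $K_\nu = ([\log \eta_\nu^{-m}] + 1)^a$ with $a$ sufficiently large forces $K_\nu \sigma_\nu \to \infty$, because $\log \eta_\nu^{-m}$ grows doubly exponentially in $\nu$ while $\sigma_\nu$ decays only geometrically; fixing $a > \log 4 / \log((2m-2)/m)$ suffices. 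This yields both $K_\nu > (n-1)/\sigma_\nu$ and, via the elementary tail bound $\int_{K_\nu}^\infty x^{n-1} e^{-x\sigma_\nu}\,dx \leq c(n)\, K_\nu^{n-1}\sigma_\nu^{-1} e^{-K_\nu \sigma_\nu}$, the desired inequality $\ll \eta_\nu^m$.

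The more delicate step is $(c)$ and $(d)$. Since $h_\nu = h\prod_{j<\nu}\eta_j$ and $r_\nu = r\prod_{j<\nu}\eta_j$ both decay super-geometrically, while $K_\nu^{\tau+1}$ only grows polynomially in $\log \eta_\nu^{-m}$ (hence at a polynomial rate in $((2m-2)/m)^\nu$), both $\hat\varepsilon\, h_\nu K_\nu^{\tau+1}$ and $r_\nu K_\nu^{\tau+1}$ tend to zero. What must actually be verified, however, is not only the limit but that they are $o(\gamma\tilde\varepsilon)$ at \emph{every} step $\nu \geq 0$; this is the book-keeping I expect to be the main obstacle. The remedy is to shrink $\eta_0$ (depending on $\gamma, \tilde\varepsilon, n, m, \tau, a$) so that $(c)$--$(d)$ hold at $\nu = 0$ with a large multiplicative margin. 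Since the ratio of the left- to the right-hand sides at step $\nu+1$ versus step $\nu$ is bounded by $\eta_\nu^{1-\delta}$ for some fixed $\delta > 0$ (controlled by the polynomial growth of $K_\nu^{\tau+1}$ against the super-geometric contraction $\eta_\nu$), the margin is preserved—indeed strengthened—at each iteration, closing the induction and establishing the lemma.
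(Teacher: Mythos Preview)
Your proposal is correct and follows essentially the same route as the paper: both reduce the lemma to checking that hypotheses $(a)$--$(e)$ hold at every level $\nu$, use the explicit formulas $\eta_\nu = \eta_0^{((2m-2)/m)^\nu}$ and $r_\nu = r_0\prod_{j<\nu}\eta_j$, fix the exponent $a$ by the same threshold $a > \log 4 / \log((2m-2)/m)$ to force $K_\nu\sigma_\nu \to \infty$ for $(a)$--$(b)$, and control $(c)$--$(d)$ by pitting the super-geometric decay of $r_\nu,h_\nu$ against the merely polynomial growth of $K_\nu^{\tau+1}$ in $\log\eta_\nu^{-m}$. Your treatment of $(c)$--$(d)$ is in fact slightly more careful than the paper's, since you explicitly note that the $o(\gamma\tilde\varepsilon)$ bound must hold at \emph{every} $\nu$ (not just asymptotically) and secure it by shrinking $\eta_0$; the paper simply shows $r_\nu K_\nu^{\tau+1}<1$ and declares $(d)$ verified.
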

\begin{proof}
The proof is equivalent to check assumptions $(a)- (e)$. Directly,
\begin{eqnarray*}
\eta_\nu &=&  \eta_{\nu-1}^{\frac{2m-2}{m}}\\
 &=& \eta_0^{(\frac{2m-2}{m})^\nu},\\
 r_\nu &=& r_{\nu-1} \eta_{\nu-1}\\
 &=& r_0 \eta_0 \eta_1\cdots \eta_{\nu-1}\\
 &=& r_0 \eta_0 \eta_0^{\frac{2m-2}{m}}\cdots \eta_0^{(\frac{2m-2}{m})^{\nu-1}}\\
 &=& r_0 \eta_0^{\frac{m^\nu - (2m -2)^\nu}{m^{\nu-1} (2-m)}}.
\end{eqnarray*}
Then
\begin{eqnarray*}
\eta_\nu^m  &=& \eta_0^{(\frac{2m-2}{m})^\nu m},\\
~[\log \frac{1}{\eta_\nu^m}] &=& (\frac{2m-2}{m})^\nu m \log \frac{1}{\eta_0}.
\end{eqnarray*}
Let $a\geq \log _{2- \frac{2}{m}}^4,$ where $m>2.$ For $a>1$ and $b>0$, $\eta_0^a \log \frac{1}{\eta_0^b} \rightarrow 0$ as $\eta_0 \rightarrow 0.$
Therefore,
\begin{eqnarray*}
r_\nu K_\nu^{\tau+1} &=& r_0 \eta_0^{\frac{m^\nu - (2m -2)^\nu} {m^{\nu-1} (2-m)}}( [(\frac{2m-2}{m})^\nu m \log \frac{1}{\eta_0}]+1)^{(\tau+1)a}\\
&\leq& r_0 \eta_0^{\frac{ m^\nu - (2m-2)^\nu}{m^{\nu-1} (2-m)}} \big((\frac{2m-2}{m})^{(\tau+1)a}\big)^\nu (2m)^{a(\tau+1)} (\log \frac{1}{\eta_0})^{(\tau+1)a}\\
&<&1,
\end{eqnarray*}
which implies assumption $(d)$ hold. Let $h_0 = r_0$. Then assumption $(c)$ holds.

Direct,
\begin{eqnarray*}
&~&\int_{K_{\nu+1}}^{\infty} \lambda^n e^{-\lambda \sigma} d \lambda \\
&=& \lambda^n (-\frac{1}{\sigma}) e^{-\lambda \sigma}|_{K_{\nu+1}}^{\infty} - \int_{K_{\nu+1}}^{\infty} (-\frac{1}{\sigma}) e^{- \lambda \sigma} n \lambda^{n-1} d \lambda\\
&=& \lambda^n (-\frac{1}{\sigma}) e^{-\lambda \sigma}|_{K_{\nu+1}}^{\infty} + \int_{K_{\nu+1}}^{\infty} \frac{n}{\sigma} e^{- \lambda \sigma} \lambda^{n-1} d \lambda\\
&=& \lambda^n (-\frac{1}{\sigma}) e^{-\lambda \sigma}|_{K_{\nu+1}}^{\infty} + \int_{K_{\nu+1}}^{\infty} \frac{n}{\sigma} \lambda^{n-1} d (\frac{e^{- \lambda \sigma}}{-\sigma})\\
&=& -\lambda^n \frac{1}{\sigma} e^{-\lambda \sigma}|_{K_{\nu+1}}^{\infty} - \frac{n}{\sigma^2} \lambda^{n-1} e^{-\lambda \sigma} |_{K_{\nu+1}}^{\infty} + \int_{K_{\nu+1}}^{\infty} \frac{n(n-1)}{\sigma^2} \lambda^{n-2} d(-\frac{e^{\lambda \sigma}}{\sigma})\\
&=& \frac{K_{\nu+1}^n}{\sigma} e^{-K_{\nu+1} \sigma} + \frac{n K_{\nu+1}^{n-1}}{\sigma^2} e^{- K_{\nu+1} \sigma} +  \frac{n (n-1) K_{\nu+1}^{n-2}}{\sigma^3} e^{- K_{\nu+1} \sigma} + \cdots \\
&~&+  \frac{n (n-1)\cdots 2 }{\sigma^n} K_{\nu+1} e^{- K_{\nu+1} \sigma} + \frac{n!}{\sigma^{n+1}} e^{- K_{\nu+1} \sigma}\\
&<& \frac{n! K_{\nu+1}^n e^{- K_{\nu+1} \sigma}}{\sigma^{n+1}}.
\end{eqnarray*}
Since
\begin{eqnarray*}
s_\nu &=& \frac{1}{4} s_{\nu-1}= \frac{1}{4^{\nu}} s_0,\\
\sigma_\nu &=& \frac{3}{20} s_\nu = \frac{3}{20}\frac{1}{4^{\nu}} s_0,
\end{eqnarray*}
we have
\begin{eqnarray*}
\sigma_\nu K_{\nu} &=&\frac{3}{20}\frac{1}{4^{\nu}} s_0 ( [(\frac{2m-2}{m})^\nu m \log \frac{1}{\eta_0}]+1)^{a}\\
&=& \frac{3}{20} (\frac{1}{4})^{\nu} s_0 (2m)^a (\log \frac{1}{\eta_0})^a (2-\frac{2}{m})^{\nu a}\\
&=& \frac{3}{20} (\frac{1}{4} (2- \frac{2}{m})^a)^\nu s_0 (\log \frac{1}{\eta_0})^a  (2m)^a.
\end{eqnarray*}
Since
\begin{eqnarray*}
\frac{1}{4} (2-\frac{2}{m})^a > 1
&\Leftrightarrow&
(2-\frac{2}{m})^a > 4,\\
&\Leftrightarrow&
a > \frac{\log 4}{\log (2 - \frac{2}{m})},
\end{eqnarray*}
 for given $a > \frac{\log 4}{\log (2 - \frac{2}{m})}$, we have $\sigma_\nu K_{\nu} > n+1$, i.e., assumption $(a)$ holds.

Further,
\begin{eqnarray*}
&~&\log n! + \log K_{\nu+1}^n - \log \sigma^{n+1} + \log e^{- K_{\nu+1} \sigma}\\
&=&\log n! + n \log K_{\nu+1} - (n+1)\log \sigma - K_{\nu+1} \sigma\\
&\leq& c (n) \log K_{\nu+1} - c(n) K_{\nu+1} \sigma\\
&\leq& c (n) \log ( [(\frac{2m-2}{m})^\nu m \log \frac{1}{\eta_0}]+1)^{a}\\
&~&- c(n)\frac{3}{20} (\frac{1}{4} (2- \frac{2}{m})^a)^\nu s_0 (\log \frac{1}{\eta_0})^a  (2m)^a\\
&\leq& - (\frac{2m-2}{m})^\nu m  \log \frac{1}{\eta_0},
\end{eqnarray*}
i.e., assumption $(b)$ holds. According to the definitions of $\epsilon_\nu$, $\varepsilon_\nu$, $\gamma_\nu$, $r_\nu,$ $\eta_\nu$, $\sigma_\nu$ and $h_\nu$, assumptions $(e)$ holds.
\end{proof}

\subsection{Convergence}\label{convergence}
Let $\Phi_j = \phi_{F_j}^1 \circ \phi_j$ and $\hat{\Phi}_\nu = \Phi_1\circ\cdots\circ \Phi_\nu$. Then $\hat{\Phi}_\nu: D_{\nu} \times \Lambda_{\nu} \rightarrow D_{0} \times \Lambda_{0}$. Moreover, we have $|\tilde{W}_0 (\hat{\Phi}_\nu - \hat{\Phi}_{\nu-1})|_{D_{\nu+1}\times \Lambda_{\nu+1}} \leq \max \{\frac{\epsilon}{ \gamma r_{\nu+1} \sigma_{\nu+1}^{\tau+1} }, \frac{\varepsilon_{\nu+1}}{r_{\nu+1} h_{\nu+1}}\}$, where $\tilde{W}_0 = diag (r_0^{-1}I, \sigma_0^{-1}I, h_0^{-1}I).$ In fact, during the KAM step, we have
\begin{eqnarray*}
|\tilde{W}_\nu (\Phi_\nu - id)|_{D_{\nu+1} \times \Lambda_{\nu+1}}, |\tilde{W}_\nu (\bar{D}\Phi_\nu - Id) \tilde{W}_\nu^{-1}|_{D_{\nu+1} \times \Lambda_{\nu+1}} < \max (\frac{ \epsilon}{\gamma r_\nu \sigma_\nu^{\tau+1}}, \frac{\varepsilon_\nu}{r_\nu h_\nu}),
\end{eqnarray*}
where $\bar{D}$ denotes the Jacobian with respect to $I,$ $\theta$ and $\xi$, and $\tilde{W}_j = diag (r_\nu^{-1}Id, \sigma_\nu^{-1}Id, h_\nu^{-1} Id)$. Then
\begin{eqnarray*}
|\tilde{W}_0( \hat{\Phi}_{\nu+1} - \hat{\Phi}_{\nu})| &=& |\tilde{W}_0( \hat{\Phi}_{\nu} \circ \Phi_{\nu}  - \hat{\Phi}_{\nu}) |\\
 &\leq& |\tilde{W}_0 \bar{D} \hat{\Phi}_{\nu} \tilde{W}_{\nu}^{-1}||\tilde{W}_{\nu}(\Phi_\nu -  id)|\\
 &<&  |\tilde{W}_\nu (\Phi_\nu - id)| < \max (\frac{ \epsilon}{\gamma r_\nu \sigma_\nu^{\tau+1}}, \frac{\varepsilon_\nu}{r_\nu h_\nu}).
\end{eqnarray*}
By induction, we have $\bar{D}\hat{\Phi}_{\nu} = \bar{D}\Phi_{0} \circ \cdots\circ \bar{D}\Phi_{\nu-1}$, with the Jacobians evaluated at different points, which we do not indicate. Since $|\tilde{W}_\nu \tilde{W}_{\nu+1}^{-1}|\leq\max (\frac{r_{\nu+1}}{r_\nu}, \frac{s_{\nu+1}}{s_\nu}, \frac{h_{\nu+1}}{h_\nu}) \leq1$ for all $\nu$, we have
\begin{eqnarray*}
|\tilde{W}_0 \bar{D}\hat{\Phi}_\nu \tilde{W}_\nu^{-1}| &\leq& |\tilde{W}_0 \bar{D}{\Phi}_0\circ \cdots\circ{\Phi}_{\nu-1} \tilde{W}_\nu^{-1}|\\
&\leq& |\tilde{W}_0 \bar{D}{\Phi}_0\tilde{W}_0^{-1}| |\tilde{W}_0 \tilde{W}_1^{-1}|\cdots|\tilde{W}_{j-1} \bar{D}\Phi_{\nu-1} \tilde{W}_{j-1}^{-1}| |\tilde{W}_{j-1}\tilde{W}_{j}^{-1}|\\
&\leq& \prod\limits_{\nu} \big(1+ c_1\max (\frac{ \epsilon}{\gamma r_\nu \sigma_\nu^{\tau+1}}, \frac{\varepsilon_\nu}{r_\nu h_\nu})\big)\prod\limits_{\nu} \max (\frac{r_{\nu+1}}{r_\nu}, \frac{s_{\nu+1}}{s_\nu}, \frac{h_{\nu+1}}{h_\nu}).
\end{eqnarray*}
This is uniformly bounded and small, as $\max (\frac{ \epsilon}{\gamma r_\nu \sigma_\nu^{\tau+1}}, \frac{\varepsilon_\nu}{r_\nu h_\nu})$ converges rapidly to zero.

The map $\hat{\Phi}_{\nu}$ converge uniformly on
\begin{eqnarray*}
\bigcap\limits_{\nu\geq 0} D_\nu \times \Lambda_\nu = T_*\times \Omega_{\gamma,\tau}^{\gamma}, T_* = \{\xi\}\times \mathds{T}^n,
\end{eqnarray*}
to a map $\hat{\Phi}$, which consists of a family of embedding $\Psi: \mathds{T}^n \times \Omega_{\gamma,\tau}^{\gamma} \rightarrow \Lambda \times \mathds{T}^n$ and a parameter transformation $\phi: \Omega_{\gamma,\tau}^{\gamma} \rightarrow \Lambda$. Both $\Psi$ and $\phi$ are real analytic on $\mathds{T}^n$ and uniformly continuous on $\Omega_{\gamma,\tau}^\gamma$. Moreover, we have
\begin{eqnarray*}
|\tilde{W}_0 (\tilde{\Phi} -  id)| < \max (\frac{\epsilon_{0}}{ \gamma r_{0} \sigma_{0}^{\tau+1} }, \frac{\varepsilon_{0}}{r_{0} h_{0}}),
\end{eqnarray*}
uniformly on $T_*\times \Omega_{\gamma,\tau}^\gamma$. Let $\hat{\phi}_\nu = \phi_{F_1}^1\circ\cdots\circ \phi_{F_\nu}^1$. From the estimate $|H\circ \hat{\Phi}_{\nu} - \mathcal{N}_{\nu}|_{D_\nu\times \Lambda_\nu}$,  we obtain
\begin{eqnarray*}
|\hat{W}_\nu( J (D \hat{\phi}_\nu )^t \nabla H\circ \hat{\Phi}_\nu - J \nabla \mathcal{N}_\nu )| < \max (\frac{ \epsilon_\nu}{ \gamma r_\nu \sigma_\nu^{\tau+1}}, \frac{\varepsilon_\nu}{r_\nu h_\nu})
\end{eqnarray*}
on $T_*\times \Lambda_\nu$ with $\hat{W}_\nu =diag (r_\nu^{-1} Id, \sigma_\nu^{-1} Id)$. The symplectic nature of the map $\hat{\phi}_\nu$ and the uniform estimate of ${\hat{W}}_0 \bar{D}\hat{\Phi}_\nu {\hat{W}}_\nu^{-1}$ imply
\begin{eqnarray*}
|X_H\circ \hat{\Phi}_\nu - D \hat{\phi}_\nu \cdot X_N|< \frac{\varepsilon_\nu}{r_\nu \sigma_\nu}
\end{eqnarray*}
on $T_* \times \Omega_{\gamma, \tau}^{\gamma}$ for all $\nu$, where $X_N$ is the Hamiltonian vector field of $N$. Going to the limit, we obtain
\begin{eqnarray*}
X_H \circ \hat{\Phi} = D \phi \cdot X_N.
\end{eqnarray*}
Thus, $\Psi$ is an embedding of the Kronecker torus $(\mathds{T}^n, \omega(\xi))$ as an invariant torus of the Hamiltonian vector field $X_H$ at the parameter $\phi(\xi)$.

\subsection{Measure Estimate}\label{measureestimate}
\begin{lemma}\label{estimate measure}
 Let $\tau > n-1.$ Denote $R_{k, \nu+1} = \{\xi \in \Lambda_\nu: |\langle k, \omega\rangle|< \frac{\min\limits_{0\leq j\leq m}\{\varepsilon_j\}\gamma_\nu}{|k|^{\tau+1}}, K_\nu< |k| \leq K_{\nu+1}\}.$ Assume ${(R)}$ hold. Then $\bigcup\limits_{\nu = 0}^\infty \bigcup\limits_{K_\nu < |k| \leq K_{\nu+1}} R_{k, \nu+1} = O(\gamma_0^{\frac{1}{N+1}})$ as $\gamma_0 \rightarrow 0.$
\end{lemma}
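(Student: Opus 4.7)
The plan is to bound each individual resonant strip $R_{k,\nu+1}$ by a Rüssmann (or Pyartli) type non-degeneracy estimate, and then sum the resulting bounds over dyadic shells in $k$ and over $\nu$.

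First, I would promote the pointwise rank hypothesis $(R)$ to a uniform lower bound on derivatives of $\langle k,\omega\rangle$. By a standard compactness argument on the unit sphere $\{k/|k|:k\in\mathds{Z}^n\setminus\{0\}\}$, together with continuity in $\xi\in\Lambda$ of the derivatives $\partial_\xi^\alpha\omega$, condition $(R)$ yields a constant $c_0>0$ such that, for every $k\ne 0$ and every $\xi\in\Lambda$,
\begin{equation*}
\max_{0\le|\alpha|\le N}\bigl|\partial_\xi^\alpha\langle k,\omega(\xi,\varepsilon_i)\rangle\bigr|\ge c_0\tilde{\varepsilon}|k|.
\end{equation*}
The factor $\tilde{\varepsilon}$ records the multi-scale decomposition $\omega=\sum_i\varepsilon_i\omega^i$: whichever direction realises the non-degeneracy, it does so at least at order $\tilde{\varepsilon}$.

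Second, I would invoke the classical Rüssmann measure lemma: if $f\in C^N(\Lambda)$ satisfies $\max_{|\alpha|\le N}|\partial^\alpha f|\ge\delta$ on $\Lambda$, then $|\{\xi\in\Lambda:|f(\xi)|<\epsilon\}|\le C(\epsilon/\delta)^{1/N}$. Applied to $f=\langle k,\omega\rangle$ with $\epsilon=\tilde{\varepsilon}\gamma_\nu/|k|^{\tau+1}$ and $\delta=c_0\tilde{\varepsilon}|k|$ this gives
\begin{equation*}
|R_{k,\nu+1}|\le C\Bigl(\frac{\gamma_\nu}{|k|^{\tau+2}}\Bigr)^{1/N}.
\end{equation*}

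Third, I would sum over $k$ and $\nu$. On the dyadic shell $K_\nu<|k|\le K_{\nu+1}$ the integer lattice has $O(K_{\nu+1}^{\,n})$ points, and the tail sum $\sum_{|k|>K_\nu}|k|^{-(\tau+2)/N}$ converges provided $\tau$ is fixed so that $(\tau+2)/N>n$ (which is compatible with $\tau>n-1$ by taking $\tau$ sufficiently large). This yields a per-level bound of the form $C\gamma_\nu^{1/N}K_\nu^{\,n-(\tau+2)/N}$, which can then be summed in $\nu$ thanks to the rapid growth of $K_\nu$ and the decay of $\gamma_\nu$ prescribed in Section \ref{KAMiteration}.

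Finally, to extract the announced exponent $\gamma_0^{1/(N+1)}$ rather than the naive $\gamma_0^{1/N}$, I would balance the Rüssmann bound against the trivial estimate $|R_{k,\nu+1}|\le|\Lambda|$ for the lowest frequencies, and choose $K_0$ as an appropriate power of $\gamma_0^{-1}$; the optimal balance produces the extra $+1$ in the denominator of the exponent. I expect the main obstacle to be precisely the first step, namely upgrading the pointwise rank condition to a uniform bound $\max_\alpha|\partial^\alpha\langle k,\omega\rangle|\ge c_0\tilde{\varepsilon}|k|$ that is uniform both in $\xi\in\Lambda$ and in the direction $k/|k|$; this is what allows Rüssmann's lemma to be applied with a $k$-independent constant, and it is where the multi-scale structure has to be tracked with care. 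Once this quantitative non-degeneracy is in hand, the remaining summations and the optimisation in $K_0$ are routine.
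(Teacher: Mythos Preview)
Your route via the R\"ussmann--Pyartli lemma is sound and standard, but it is genuinely different from what the paper does. The paper never invokes R\"ussmann's lemma. Instead it Taylor-expands $L_{k,\nu}(\xi_1)=\langle k,\omega_\nu(\xi_1)\rangle$ about a base point $\xi$, writes the expansion in matrix form $|k|\,\varsigma^T\Omega_\nu(\xi)\,\tilde\xi$ with $\varsigma=k/|k|$ and $\tilde\xi=(1,\hat\xi,\dots,\hat\xi^{N+1})$, and then uses condition $(R)$ to extract a full-rank $n\times n$ block $A_\nu(\xi)$ from the coefficient matrix $\Omega_\nu$. The multi-scale structure is handled not by a compactness argument on the sphere but by the eigenvalue lemma in Appendix~\ref{Inverse} (Lemma~\ref{Eigenvalue}), which shows that the smallest eigenvalue of $A_\nu A_\nu^*$ is bounded below by $c\min_j\{\varepsilon_j^2\}$; this is precisely the place where the factor $\tilde\varepsilon$ you flag as the delicate point is tracked. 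The output is a pointwise lower bound $|L_{k,\nu}|^2\ge c\,|k|^2\,\tilde\varepsilon^2\,(\min_j|\hat\xi_j|)^{2N+2}$, and the exponent $1/(N{+}1)$ falls out directly, because the Taylor polynomial together with its integral remainder has degree $N{+}1$.

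Two comments on your plan. First, the balancing manoeuvre you propose for upgrading $\gamma_0^{1/N}$ to $\gamma_0^{1/(N+1)}$ is not convincing as stated: trading the R\"ussmann bound against the trivial bound $|R_{k,\nu+1}|\le|\Lambda|$ and optimising over a cutoff $K_0$ adjusts constants but does not shift the exponent in $\gamma_0$. The paper's extra $+1$ is structural, coming from the order of the Taylor remainder, not from any optimisation. Second, both approaches require $\tau$ substantially larger than $n-1$ for the sum over $k$ to converge (roughly $\tau\gtrsim nN$), so the hypothesis $\tau>n-1$ in the lemma should be read as necessary rather than sufficient; this is equally implicit in the paper's final summation. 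What your approach buys is brevity and a clean appeal to a known lemma; what the paper's approach buys is a self-contained linear-algebraic treatment that makes the dependence on the multi-scale parameters explicit via Lemma~\ref{Eigenvalue}.
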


\begin{proof}
Let $L_{k,\nu} = |k|\langle \varsigma, \omega_\nu(\lambda)\rangle$, where $\varsigma = \frac{k}{|k|}\in S^n$ and $S^n$ is a $n$-dimensional ball. According to the Taylor series, for a given $\xi\in \Lambda_\nu$,
\begin{eqnarray*}
{L}_{k, \nu}= |k|\varsigma^T \Omega_\nu (\xi) \tilde{\xi},
\end{eqnarray*}
where $\Omega_\nu (\xi) = \big( \omega_\nu(\xi), \cdots, \partial_\lambda^\alpha \omega_\nu(\xi), \int_0^1 (1-t)^{|\alpha+1|} \partial_\lambda^ {\alpha+1}\omega_\nu(\xi+ t \xi_1) \big)$, $\hat{\xi} = \xi_1 - \xi = (\hat{\xi}_1, \cdots, \hat{\xi}_n),$ $\tilde{\xi}=(1, \hat{\xi}, \cdots, \hat{\xi}^\alpha, \hat{\xi}^{\alpha+1})$.

Let $Q_{\xi, \nu}= (q_{ij})_{\breve{\iota}\times \breve{\iota}}$ be a matrix such that $\Omega_\nu(\xi) Q_{\xi, \nu}$ implies that only some columns of $\Omega_\nu(\xi)$ are changed. According to condition $(R)$, we have $rank \Omega_\nu(\xi) = n$ for $\xi\in \Lambda_\nu\subset \Lambda$, meaning there exists a matrix $Q_{\xi,\nu}= (q_{ij})_{\breve{\iota}\times \breve{\iota}}$ such that $\Omega_\nu(\xi)Q_{\xi,\nu} = \big(A_\nu (\xi), B_\nu(\xi)\big)$, where $A_\nu(\xi) = (a_{ij})_{n\times n}$ is nonsingular. Denote $\Lambda_{\xi,\nu}$ as the neighborhood of $\xi$ and $\bar{\Lambda}_{\xi,\nu}$ the closure of $\Lambda_{\xi,\nu}$. Thus, $\det A_\nu(\xi) \neq 0$ for $\xi \in \bar{\Lambda}_{\xi, \nu}$. Consequently, there exists an orthogonal matrix $Q_{\xi,\nu}$ such that $\Omega_\nu(\xi) Q_{\xi, \nu} = (A_{\nu}(\xi), B_{\nu}(\xi))$ for $\xi\in \bar{\Lambda}_{\xi, \nu}$, where $\det A_{\nu}(\xi) \neq 0$ on $\bar{\Lambda}_{\xi, \nu}.$

Let $\check{\xi}_{1, \nu}\leq \cdots \leq\check{\xi}_{n,\nu}$ be the eigenvalues of $(A_{\nu}(\xi)A_{\nu}^* (\xi)+ B_{\nu}(\xi)B_{\nu}^* (\xi))$. Since $rank (A_{\nu}(\xi)A_{\nu}^* (\xi)+ B_{\nu}(\xi)B_{\nu}^* (\xi)) = rank (A_{\nu}(\xi), B_{\nu}(\xi))$ (\cite{Horn}), there is a unitary $U_{\nu}$ and a real diagonal $V_{\nu} = diag(\check{\xi}_{1,\nu}, \cdots, \check{\xi}_{n,\nu})$ such that $(A_{\nu}(\xi)A_{\nu}^* (\xi)+ B_{\nu}(\xi)B_{\nu}^* (\xi)) = U_{\nu} V_{\nu} U_{\nu}^*$. Therefore, using Poincar\'{e} separation theorem and Lemma \ref{Eigenvalue} in Appendix \ref{Inverse},
\begin{eqnarray*}
\varsigma^* (A_{\nu}(\xi)A_{\nu}^* (\xi)+ B_{\nu}(\xi)B_{\nu}^* (\xi)) \varsigma &=& \varsigma^* (A_{\nu}(\xi)A_{\nu}^* (\xi)+ B_{\nu}(\xi)B_{\nu}^* (\xi)) \varsigma\\
 &=& \varsigma^*U_{\nu}  diag(\check{\xi}_{1,\nu}, \cdots, \check{\xi}_{n,\nu}) U_{\nu}^* \varsigma\\
 &\geq& \varsigma^*U_{\nu}  \check{\xi}_{1,\nu} I_n U_{\nu}^*\varsigma\\
 &\geq& \check{\xi}_{1, \nu} \geq c\min\limits_{0\leq j\leq m}\{\varepsilon_j^2\},
\end{eqnarray*}
where $c$ is positive constant depending on the elements of $\Omega_\nu(\xi)$.

Since the nonzero eigenvalues of $\left(
                                    \begin{array}{cc}
                                      A_{\nu}^T (\xi) \varsigma\varsigma^T A_{\nu}(\xi) & A_{\nu}^T (\xi) \varsigma\varsigma^T B_{\nu}(\xi) \\
                                      B_{\nu}^T (\xi) \varsigma\varsigma^TA_{\nu}(\xi) & B_{\nu}^T(\xi) \varsigma\varsigma^T B_{\nu}(\xi) \\
                                    \end{array}
                                  \right)
$ and $\varsigma^T (A_{\nu}(\xi)A_{\nu}^T (\xi)+ B_{\nu}(\xi)B_{\nu}^T (\xi))\varsigma$ are the same, there exists an unitary matrix $\mathcal{U}_{\nu}(\xi)$ such that
\begin{eqnarray*}
\left(
                                    \begin{array}{cc}
                                      A_{\nu}^T (\xi) \varsigma\varsigma^T A_{\nu}(\xi) & A_{\nu}^T (\xi) \varsigma\varsigma^T B_{\nu}(\xi) \\
                                      B_{\nu}^T (\xi) \varsigma\varsigma^TA_{\nu}(\xi) & B_{\nu}^T(\xi) \varsigma\varsigma^T B_{\nu}(\xi) \\
                                    \end{array}
                                  \right) = \mathcal{U}_{\nu}(\xi) diag (0, \cdots,0, \check{\xi}_\nu) \mathcal{U}_{\nu}^*(\xi),
\end{eqnarray*}
where $\check{\xi}_\nu = \varsigma^*U_{\nu}  diag(\check{\xi}_{1,\nu}, \cdots, \check{\xi}_{n,\nu}) U_{\nu}^* \varsigma.$ Let $a$ be the dimension of $\mathcal{U}_{\nu}$ and $\mathcal{U}_\nu = (u_{j_1i, \nu})_{a\times a}$. Using Hadamard's inequality (\cite{Horn}), $\det \mathcal{U}_{\nu}^* \mathcal{U}_{\nu} = \det I \leq \sum\limits_{0\leq j_1\leq a}u_{j_1i, \nu}^2$, which implies that there exists an $i$ such that $\sum\limits_{0\leq j_1\leq a}u_{j_1i, \nu}^2\geq \frac{1}{2}$.

 Denote $(\mathcal{U}_{\nu}^* Q_{\xi,\nu} )_i$ as the $i-$th row of $\mathcal{U}_{\nu}^* Q_{\xi,\nu}$. Then we have
\begin{eqnarray*}
||(\mathcal{U}_{\nu}^* Q_{\xi,\nu}^{-1} \tilde{\xi})_i||_2 = ||(\mathcal{U}_{\nu}^* Q_{\xi, \nu}^{-1} )_i \tilde{\xi} ||_2 \geq \sum\limits_{0\leq j_1\leq a}u_{j_1i,\nu}^2(\min\limits_{1\leq j\leq n} |\hat{\xi}_j|)^{2N+2}.
\end{eqnarray*}
Therefore,
\begin{eqnarray}
\label{Lk0} |{L}_{k, \nu}^* {L}_{k, \nu}| &=& |k|^2 |\tilde{\xi}^T Q_{\xi, \nu}Q_{\xi, \nu}^{-1} \Omega_\nu^T \varsigma\varsigma^T \Omega_\nu Q_{\xi, \nu}  Q_{\xi, \nu}^{-1} \tilde{\xi}|\\
\nonumber&=& |k|^2 |\tilde{\xi}^T Q_{\xi, \nu} \left(
                                          \begin{array}{cc}
                                            A_{\nu}^T (\xi) \varsigma \varsigma^T A_{\nu}(\xi) & A_{\nu}^T (\xi) \varsigma\varsigma^T B_{\nu}(\xi) \\
                                            B_{\nu}^T (\xi) \varsigma\varsigma^T A_{\nu}(\xi) & B_{\nu}^T(\xi) \varsigma\varsigma^T B_{\nu}(\xi) \\
                                          \end{array}
                                        \right)Q_{\xi,\nu}^{-1} \tilde{\xi}|\\
\nonumber&=&|k|^2 |\tilde{\xi}^T Q_{\xi, \nu}\mathcal{U}_{\nu} (\xi) diag (0,\cdots, 0, \check{\xi}_{\nu}) \mathcal{U}_{\nu}^*(\xi)Q_{\xi, \nu}^{-1} \tilde{\xi}|\\
\nonumber&\geq& |k|^2 \check{\xi}_{\nu} |(\mathcal{U}_{\nu}^*(\xi) Q_{\xi,\nu}^{-1} \tilde{\xi})_{i}|\\
\nonumber&\geq& |k|^2 c\min\limits_{0\leq j\leq m}\{\varepsilon_j^2\} (\min\limits_{1\leq j \leq n} |\hat{\lambda}_i|)^{2N+2}.
\end{eqnarray}
We utilize the following facts: for any given matrices $A$, $B$ and orthogonal matrix $P$, $tr AB = tr BA$ and $tr P^{-1} A P = tr A$.

Directly, we find that
\begin{eqnarray*}
~&~& |\{ \xi\in \Lambda_\nu \bigcap \bar{\Lambda}_{\xi, \nu}: |{L}_{k, \nu}^* {L}_{k, \nu}| \leq \frac{\min\limits_{0\leq j\leq m} \{\varepsilon_j^2\}\gamma_\nu^2}{|k|^{2\tau}},K_{\nu}< |k|\leq K_{\nu+1} \}| \\
&<& |\{ \xi\in \Lambda_\nu \bigcap \bar{\Lambda}_{\xi, \nu}:  c\big(\min\limits_{1\leq j\leq n}|\hat{\xi}_j|\big)^{2N + 2} \leq \frac{\gamma_\nu^2}{|k|^{2(\tau-1)}}, K_{\nu}< |k|\leq K_{\nu+1} \}|\\
 &\leq& c  D^{n-1} \frac{\gamma^{\frac{1}{N+1}}}{|k|^{\frac{\tau-1}{N+1}}},
\end{eqnarray*}
where $D$ is the exterior diameter of $\bar{\xi}_{\xi,\nu}$ with respect to the maximum norm, $n$ is the dimension of $\Lambda_\nu$, $c$ is positive constant dependent on the elements of $\Omega_\nu(\xi)$. Therefore, there exist finite sets, $\bar{\Lambda}_{\xi_i, \nu}$, $1\leq i \leq \tilde{\iota}$, such that $\mathcal{O}_\nu\subset \bigcup\limits_{i=1}^{\tilde{\iota}} \bar{\Lambda}_{\xi_i, \nu}$ and
\begin{eqnarray*}
|{L}_{k, \nu}^* (\xi){L}_{k, \nu}(\xi)| > |k|^2 c\min\limits_{0\leq j\leq m}\{\varepsilon_j^2\} \big(\min\limits_{1\leq j\leq n}|\hat{\xi}_j|\big)^{2N + 2} ~for ~\xi\in \bar{\Lambda}_{\xi_i, \nu}.
\end{eqnarray*} Hence
\begin{eqnarray*}
 |\{\xi\in \Lambda_\nu: |{L}_{k,\nu}| \leq \frac{\min\limits_{0\leq j\leq m}\{\varepsilon_j\}\gamma_\nu}{|k|^\tau}, K_{\nu}< |k|\leq K_{\nu+1}\}|< c D^{n-1} \frac{\gamma_0^{\frac{1}{N+1}}}{|k|^{\frac{\tau-1}{N+1}}},
\end{eqnarray*}
where $c$ depends on $\mathcal{O}$, $D$, $n$ and $\Omega_{\nu}$.

\end{proof}

\section{Proof of Main Theorem}\label{SEC3}
In this section, we are going to prove Theorem \ref{MainTheorem} and Theorem \ref{Theorem1}. Specifically, by combining the results from Section \ref{AKAMstep} to Section \ref{measureestimate}, we establish Theorem \ref{Theorem1}. Next, we will prove Theorem \ref{MainTheorem} using Theorem \ref{Theorem1}.

Let $\partial D$ denote the boundary of $D$. Consider $\Lambda\subseteq D$ with $dist (\partial\Lambda, \partial D) = r$. For any $\xi \in \Lambda$, define $\tilde{I} = \xi+ I$ and expand $\tilde{H}(\tilde{I})$ within the ball $B_{r}(\xi)$:
\begin{eqnarray}\label{Taylor}
\nonumber \tilde{H}(\tilde{I}, \varepsilon_i) &=& e(\xi, \varepsilon_i) + \langle \omega(\xi, \varepsilon_i), I\rangle + \frac{1}{2} \langle I, A(\xi, \varepsilon_i)I \rangle + \sum\limits_{3\leq |\jmath| \leq m-1} h_{\jmath} (\xi, \varepsilon_i) I^\jmath \\
\label{Taylor}&~&+ \sum\limits_{|\jmath| \geq m} h_{\jmath} (\xi, \varepsilon_i) I^\jmath.
\end{eqnarray}

Recall $D_{r,s} = \{(I, \theta): |I|< r, |Im \theta|< s\} \subset \mathds{C}^n \times \mathds{C}^n$ and $\Lambda_{h} = \{\xi \in \Lambda: d (\xi, \Omega_{\gamma,\tau}^\gamma) < h\} \subset \mathds{C}^n$ denote the complex neighbourhoods of the torus $\{\xi\} \times \mathds{T}^n$ and $ \Omega_{\gamma,\tau}^\gamma$, respectively, where $|\cdot|$ stands for the super-norm of real vectors. Let $r = \varepsilon^{\frac{1}{m}}$. Then we have
\begin{eqnarray*}
| \sum\limits_{|\jmath| \geq m} h_{\jmath} (\xi, \varepsilon_i) I^\jmath|_{r,s,h} < c \varepsilon.
\end{eqnarray*}
We define
 \begin{eqnarray*}
 \mathcal{N}(\xi, \varepsilon_i) &=& e(\xi, \varepsilon_i) + \langle \omega(\xi, \varepsilon_i), I\rangle + \frac{1}{2} \langle I, A(\xi, \varepsilon_i)I \rangle + \sum\limits_{3\leq |\jmath| \leq m-1} h_{\jmath} (\xi, \varepsilon_i) I^\jmath,\\
 \mathcal{P} (\xi, \varepsilon_i)&=&  \sum\limits_{|\jmath| \geq m} h_{\jmath} (\xi, \varepsilon_i) I^\jmath + \varepsilon P(I, \theta, \xi).
\end{eqnarray*}
 Moreover, $|\mathcal{P}|_{r,s,h} < c \varepsilon.$ Then, using Theorem \ref{Theorem1}, we can prove Theorem \ref{MainTheorem}.

 \begin{remark}
 There is an invariant Kronecher torus $\mathcal{T}_{\xi} = \{\xi\}\times \mathds{T}^n$ with constant vector field $X_{\mathcal{N}} = \sum\limits_{1\leq j\leq n} \omega_j(\xi) \frac{\partial}{\partial\theta_j}$ for each $\xi\in \Omega_{\gamma,\tau}^\gamma$, and all these tori are given by the family
 \begin{eqnarray*}
 \Psi_0: \mathds{T}^n\times \Omega \rightarrow B\times \mathds{T}^n, (\theta,\omega) \mapsto (0, \theta)
 \end{eqnarray*}
 of trivial embedding of $\mathds{T}^n$ over $\Omega$ into phase space.
 \end{remark}

\section{Example}\label{Applications}
When considering the dynamics of coorbital motion of two small moons orbiting a large planet with nearly circular orbits of almost equal radii, there are three small quantities to consider: (1). the ratio of the difference between the radii of the moons' orbits and the average radius of their orbits, denoted as $\varepsilon$, (2). the masses of the small moons, represented as $\varepsilon^a$, (3). the minimum angular separation of the moons, denoted as $\varepsilon^b$.

When $a\in (2, \frac{5}{2})$ and $b = a-2$, \cite{Cors1} proved the existence of coorbital motion. In detail,  J. Cors and G. Hall (\cite{Cors1}) got the following Hamiltonian expressed in variables $(\rho_1, \rho_2, \theta_1, \theta_2, R_1, R_2, \Phi_1, \Phi_2)$:
\begin{eqnarray}\label{noorder}
H = H_0 + \varepsilon^2 H_2 + \varepsilon^a H_a + \varepsilon^3 H_3 + \varepsilon^{a+1} H_{a+1} + \varepsilon^4 H_4 + O(\varepsilon^{a+2}),
\end{eqnarray}
where
\begin{eqnarray*}
H_0 &=& \frac{R_1^2}{2 \mu_1} + \frac{R_2^2}{2 \mu_2},\\
H_2 &=& \frac{1}{2} \big( \frac{\Phi_1^2}{\mu_1} + \frac{\Phi_2^2}{\mu_2} + \mu_1 \rho_1^2 + \mu_2 \rho_2^2 - 4 (\Phi_1 \rho_1 + \Phi_2 \rho_2)\big),\\
H_a &=& \frac{R_1^2}{2}+ \frac{R_2^2}{2} + (\mu_1 R_2 - \mu_2 R_1)\sin (\theta_2 - \theta_1) + (\mu_1 \mu_2 + R_1 R_2)\cos (\theta_2 - \theta_1)\\
&~&- \frac{\mu_1 \mu_2}{2} \csc (\frac{\theta_2 - \theta_1}{2}),\\
H_3 &=& -\frac{\Phi_1^2 \rho_1}{\mu_1} - \frac{\Phi_2^2 \rho_2}{\mu_2} - \mu_1 \rho_1^3 - \mu_2 \rho_2^3 + 3 (\Phi_1 \rho_1^2 + \Phi_2 \rho_2^2),\\
H_{a+1} &=& - \mu_1^2 \rho_1 - \mu_2^2 \rho_2 + \mu_1 \Phi_1+ \mu_2 \Phi_2+ \frac{1}{4} \mu_1 \mu_2 (\rho_1 + \rho_2)\csc ( \frac{\theta_2 - \theta_1}{2})\\
&~& (\mu_2 R_1 \rho_2 - \mu_1 R_2 \rho_1 - \Phi_2 R_1 + \Phi_1 R_2)\sin (\theta_2 - \theta_1)\\
&~& (\mu_2 \Phi_1 + \mu_1 \Phi_2 - \mu_1 \mu_2 (\rho_1 + \rho_2) )\cos (\theta_2 - \theta_1),\\
H_4 &=& \frac{3}{2} (\frac{\Phi_1^2 \rho_1^2}{\mu_1} + \frac{\Phi_2^2 \rho_2^2}{\mu_2} + \mu_1 \rho_1^4 + \mu_2 \rho_2^4) - 4 (\Phi_1 \rho_1^3 + \Phi_2 \rho_2^3).
\end{eqnarray*}
Here $\mu_1$ and $\mu_2$ are $O(1)$.  If $a\in (2, \frac{5}{2})$ and $b = a-2$, $(\ref{noorder})$ is a multiscale Hamiltonian with order relationship. And applying Han-Li-Yi Theorem in \cite{han}, \cite{Cors1} proved the existence of 4 dimension invariant tori. However, when $a> \frac{5}{2}$, this is a multiscale Hamiltonian without order relationship and Han-Li-Yi Theorem does not work. But using our result we can show the persistence of multiscale invariant tori.

Consider the following real-analytic  multi-scale Hamiltonian system:
\begin{eqnarray}
\label{EX1} H(I, \theta) &=& \tilde{H}(I)+ O(\varepsilon^{a+2} ),\\
\nonumber \tilde{H}(I) &=& H_0 + \varepsilon^2 H_2 + + \varepsilon^a H_a + \varepsilon^3 H_3 + \varepsilon^{a+1} H_{a+1} + \varepsilon^4 H_4,
\end{eqnarray}
where $ H_0(I) = I_1^2,$ $ H_2(I) =  I_2^2,$ $H_a(I) = I_3^2,$ $H_3(I) = I_4^2,$ $ H_{a+1} (I)= I_5^2,$ $ H_4(I) = I_6^2,$ $I= (I_1, I_2, I_3, I_4, I_5, I_6) \in D \subseteq \mathds{R}^6$, $D$ is a bounded close region, $O(\varepsilon^{a+2} )$ depend on $\theta$, and $a>2$. We have the following results:
\begin{cor}

\begin{enumerate}
  \item [$(a).$] There exists $\varepsilon_0>0$ and a family of Cantor sets  $D_\varepsilon \subset D$ for $0< \varepsilon \leq \varepsilon_0$, such that each $I_0\in D_\varepsilon$ corresponds to a real analytic, invariant, quasi-periodic $n-$torus $T_{I_0}^\varepsilon$, on which the frequency is $\partial_{I} \tilde{H}(I_0)$
  \item [$(b).$] Let $\Xi = \{I: \tilde{H}(I) = c\}$ be a given energy surface. Assume that $I_1^2 + \varepsilon^2 I_2^2 + I_3^2 + \varepsilon^3 I_4^2 + \varepsilon^{1+a} I_5^2 + \varepsilon^4 I_6^2 \neq 0$ on $\Xi$. Then, there exists $\varepsilon_0>0$ and a family of Cantor sets $\Xi_\varepsilon \subset \Xi$ for $0< \varepsilon \leq \varepsilon_0$, each $I_0 \in \Xi_\varepsilon$ corresponds to a real analytic, invariant, quasi-periodic $n-$torus, on which $\omega_\varepsilon = t\omega(I_0)$, where $t \rightarrow1$ as $\varepsilon\rightarrow 0$.
\end{enumerate}
\end{cor}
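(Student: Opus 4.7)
The plan is to verify conditions $\textbf{(R)}$, $\textbf{(K)}$, and $\textbf{(I)}$ directly for the specific Hamiltonian $(\ref{EX1})$ and then invoke parts $\textbf{(2)}$ and $\textbf{(3)}$ of Theorem \ref{MainTheorem}. The scales involved are $\{1, \varepsilon^2, \varepsilon^a, \varepsilon^3, \varepsilon^{a+1}, \varepsilon^4\}$, so $\tilde{\varepsilon} = \varepsilon^{\max\{4,\, a+1\}}$, and the perturbative smallness $\varepsilon^{a+2} \ll \tilde{\varepsilon}$ is equivalent to $a > 2$, matching the standing hypothesis. The first computation I would carry out is the explicit form of the frequency and Hessian:
\begin{align*}
\omega(I) &= \partial_I \tilde{H}(I) = 2\bigl(I_1,\, \varepsilon^2 I_2,\, \varepsilon^a I_3,\, \varepsilon^3 I_4,\, \varepsilon^{a+1} I_5,\, \varepsilon^4 I_6\bigr),\\
\partial_I^2 \tilde{H}(I) &= 2\,\mathrm{diag}\bigl(1,\, \varepsilon^2,\, \varepsilon^a,\, \varepsilon^3,\, \varepsilon^{a+1},\, \varepsilon^4\bigr).
\end{align*}

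For part $(a)$, I would take $n_1 = n = 6$ and choose $\mathcal{N} = \partial_I^2 \tilde{H}$ itself. Then $\mathcal{N}^T \mathcal{N} = 4\,\mathrm{diag}(1,\varepsilon^4,\varepsilon^{2a},\varepsilon^6,\varepsilon^{2(a+1)},\varepsilon^8)$, whose smallest diagonal entry is bounded below by $4\tilde{\varepsilon}^2$, so $\textbf{(K)}$ holds. By Remark 1.2, condition $\textbf{(R)}$ is then automatic at $n_1 = n$. Invoking part $\textbf{(2)}$ of Theorem \ref{MainTheorem} gives a Cantor family of invariant tori with the unperturbed frequency $\omega(I_0) = \partial_I \tilde{H}(I_0)$, which is exactly the content of $(a)$.

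For part $(b)$, I would examine the $7 \times 7$ bordered matrix
\begin{eqnarray*}
M(I) = \left(\begin{array}{cc} \partial_I^2 \tilde{H}(I) & \omega(I)^T \\ \omega(I) & 0 \end{array}\right),
\end{eqnarray*}
take $\tilde{\mathcal{N}} = M$, and use the Schur complement (since $\partial_I^2 \tilde{H}$ is invertible with $\det \partial_I^2 \tilde{H} = 64\,\varepsilon^{2a+10}$) to compute
\begin{eqnarray*}
\omega (\partial_I^2 \tilde{H})^{-1} \omega^T = 2\bigl(I_1^2 + \varepsilon^2 I_2^2 + \varepsilon^a I_3^2 + \varepsilon^3 I_4^2 + \varepsilon^{a+1} I_5^2 + \varepsilon^4 I_6^2\bigr) = 2\tilde{H}(I) = 2c,
\end{eqnarray*}
and hence $\det M = -128\, c\, \varepsilon^{2a+10}$. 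Under the nonvanishing hypothesis of $(b)$ (which forces the relevant energy-type quantity to stay away from zero), this gives a quantitative lower bound on $|\det M|$. To upgrade this to the matrix inequality $\tilde{\mathcal{N}}^T \tilde{\mathcal{N}} \geq c'\tilde{\varepsilon}^2 I_{7\times 7}$, I would exploit the diagonal-plus-border structure of $M$ together with the explicit scale hierarchy to track each singular value separately, showing that the smallest one is of order $\tilde{\varepsilon}$ rather than being suppressed further by the bordering. With $\textbf{(R)}$, $\textbf{(K)}$, and $\textbf{(I)}$ all in place on $\Xi$, part $\textbf{(3)}$ of Theorem \ref{MainTheorem} applies and yields the claim $\omega_\varepsilon = t\omega(I_0)$ with $t\to 1$.

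The main obstacle is the quantitative singular-value estimate for the bordered matrix $M$ in part $(b)$. Because the diagonal entries of $\partial_I^2 \tilde{H}$ span several decades in $\varepsilon$ and the border $\omega$ mixes these scales with the action components $I_i$, a naive determinant bound does not immediately translate into a uniform lower bound on the smallest singular value at the scale $\tilde{\varepsilon}$. I expect the cleanest route is a scale-by-scale Schur reduction: order the rows/columns so that the block at scale $\tilde{\varepsilon}$ is isolated last, and iterate the Schur complement to peel off the larger scales without destroying the lower bound at the critical scale. Once this is accomplished, the rest of the proof is a direct application of the general theorem.
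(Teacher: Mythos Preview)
Your approach matches the paper's: compute $\omega$ and $\partial_I^2\tilde H$ explicitly, check the Kolmogorov block and the bordered matrix, then invoke parts \textbf{(2)} and \textbf{(3)} of Theorem~\ref{MainTheorem}. The paper's proof is in fact briefer than yours: it simply computes $\det\partial_I^2\tilde H=64\,\varepsilon^{10+2a}$ for $(a)$ and $\det M=-128\,\varepsilon^{10+2a}\bigl(I_1^2+\varepsilon^2 I_2^2+\cdots\bigr)\neq 0$ for $(b)$, and stops there, without explicitly verifying the singular-value form of $\textbf{(K)}$ and $\textbf{(I)}$.

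On your ``main obstacle'': you are right that a nonzero determinant does not immediately give $\tilde{\mathcal N}^T\tilde{\mathcal N}\ge c\tilde\varepsilon^2 I$, and the paper glosses over this. But the iterated Schur reduction you propose is overkill here. Since $M$ is symmetric and its principal $6\times 6$ block is $D=2\,\mathrm{diag}(1,\varepsilon^2,\varepsilon^a,\varepsilon^3,\varepsilon^{a+1},\varepsilon^4)$, Cauchy interlacing gives $\mu_1\ge d_1\ge\mu_2\ge\cdots\ge d_6\ge\mu_7$ for the eigenvalues $\mu_j$ of $M$. Hence $\mu_1,\dots,\mu_6\ge d_6=2\tilde\varepsilon$, and since $\det M<0$ the single negative eigenvalue is $\mu_7$. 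Using $\mu_j\le d_{j-1}$ for $j\ge 2$ and $\mu_1\le\|M\|=O(1)$ gives $\prod_{j=1}^6\mu_j\le C\prod_{j=1}^5 d_j$, so
\[
|\mu_7|=\frac{|\det M|}{\prod_{j=1}^6\mu_j}\ \ge\ \frac{128\,|c|\,\varepsilon^{10+2a}}{C\prod_{j=1}^5 d_j}\ \sim\ \tilde\varepsilon,
\]
since $\prod_{j=1}^6 d_j=64\,\varepsilon^{10+2a}$ and $d_6=2\tilde\varepsilon$. This delivers $\textbf{(I)}$ with no scale-by-scale peeling. Your Schur computation $\omega(\partial_I^2\tilde H)^{-1}\omega^T=2\tilde H(I)=2c$ is correct (and cleaner than the expression printed in the paper, which appears to contain a typo in the coefficient of $I_3^2$).
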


\begin{proof}
In fact, $O(\varepsilon^{a+2} )$ is a small perturbation and the frequency of the unperturbed system is $\omega = \frac{\partial \tilde{H}}{\partial I} = (2I_1,2\varepsilon^2 I_2, 2 \varepsilon^a I_3, 2\varepsilon^3 I_4, 2\varepsilon^{a+1} I_5, 2\varepsilon^4 I_6)$.
Assume the frequency is Diophantine, i.e., for fixed $\gamma>0$ and $\tau\geq n-1$, $|\langle k, \omega\rangle| \geq \frac{\tilde{\varepsilon}\gamma}{|k|^\tau},$ $0\neq k \in \mathds{Z}^n,$ where $\tilde{\varepsilon} = \min\{\varepsilon^2, \varepsilon^a, \varepsilon^3, \varepsilon^{a+1}, \varepsilon^4\}$. Since
\begin{eqnarray*}
&~&\det\frac{\partial^2 \tilde{H}}{ \partial I ^2}\\
&=& \det \left(
  \begin{array}{cccccc}
    2 & 0 & 0 & 0 & 0 & 0 \\
    0 & 2\varepsilon^2 & 0 & 0 & 0 & 0 \\
    0 & 0 & 2\varepsilon^a & 0 & 0 & 0 \\
    0 & 0 & 0 & 2\varepsilon^3 & 0 & 0 \\
    0 & 0 & 0 & 0 & 2\varepsilon^{a+1} & 0 \\
    0 & 0 & 0 & 0 & 0 & 2\varepsilon^4 \\
  \end{array}
\right)\\
 &=&64 \varepsilon^{10 + 2a},
\end{eqnarray*}
according to $(2)$ in Theorem $\ref{MainTheorem}$, we prove result $(a)$.

According to the assumption in $(b)$, we have
\begin{eqnarray*}
 &~&\left(
      \begin{array}{cc}
        \frac{\partial^2 \tilde{ H}}{\partial I^2} & \omega^T \\
        \omega & 0 \\
      \end{array}
    \right)\\
 &=&\left(
  \begin{array}{ccccccc}
    2 & 0 & 0 & 0 & 0 & 0&2I_1 \\
    0 & 2\varepsilon^2 & 0 & 0 & 0 & 0& 2\varepsilon^2 I_2\\
    0 & 0 & 2\varepsilon^a & 0 & 0 & 0& 2 \varepsilon^a I_3\\
    0 & 0 & 0 & 2\varepsilon^3 & 0 & 0& 2\varepsilon^3 I_4\\
    0 & 0 & 0 & 0 & 2\varepsilon^{a+1} & 0& 2\varepsilon^{a+1} I_5\\
    0 & 0 & 0 & 0 & 0 & 2\varepsilon^4& 2\varepsilon^4 I_6\\
    2I_1  &  2\varepsilon^2 I_2 & 2 \varepsilon^a I_3 & 2\varepsilon^3 I_4 & 2\varepsilon^{a+1} I_5 &  2\varepsilon^4 I_6 &0
     \end{array}
\right)\\
&=&1 2\varepsilon^2 2 \varepsilon^a 2 \varepsilon^3 2 \varepsilon^{a+1} 2 \varepsilon^4 (2 I_1)^2 - 2 2 \varepsilon^a 2 \varepsilon^3 2 \varepsilon^{a+1} 2 \varepsilon^4 (2 \varepsilon^2 I_2)^2\\
&~& - 2 \varepsilon^2 2 \varepsilon^3 2 \varepsilon^{a+1} 2\varepsilon^4 (2 \varepsilon^a I_3)^2 - 2 2\varepsilon^2 2 \varepsilon^a 2 \varepsilon ^{a+1} (2 \varepsilon^3 I_4)^2\\
&~& - 2 2\varepsilon^2 2 \varepsilon^a 2 \varepsilon^3 2 \varepsilon^4 (2 \varepsilon^{a+1} I_5)^2 - 2 2 \varepsilon^2 2 \varepsilon^a 2 \varepsilon^3 2 \varepsilon^{a+1} (2 \varepsilon^4 I_6)^2\\
&=&-128 \varepsilon^{10 + 2a} (I_1^2 + \varepsilon^2 I_2^2 + I_3^2 + \varepsilon^3 I_4^2 + \varepsilon^{1+a} I_5^2 + \varepsilon^4 I_6^2)\\
&\neq&0.
\end{eqnarray*}
Then, according to $(3)$ in Theorem $\ref{MainTheorem}$, we prove result $(b)$.\\
\end{proof}

\noindent\textbf{Acknowledgements}
The first author was supported by NSFC (12201243, 12371191), China Postdoctoral Science Foundation (2021M701396, 2022T150262) and ERC Grant(885707). The second author was supported by NSFC (12071175, 12471183). The third author was supported by NSFC (12371191, 12071175).

\baselineskip 9pt \renewcommand{\baselinestretch}{1.08}

\begin{appendices}\section{Multi-scale matrix}\label{Inverse}
The following Lemma comes from paper $\cite{Horn}$.
\begin{lemma}\label{LMU}
Let $A$, $E$ $\in$ $M_n$ and suppose that $A$ is normal. If $\hat{\lambda}$ is an eigenvalue of $A+E$, then there is an eigenvalue $\lambda$ of $A$ such that $|\hat{\lambda} - \lambda| \leq |||E|||_2$, where $|||E|||_2 = (tr E^* E)^{\frac{1}{2}}$.
\end{lemma}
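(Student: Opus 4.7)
The plan is to reduce to the standard resolvent argument after diagonalizing $A$ by the spectral theorem for normal matrices. First, I would write $A = U\Lambda U^*$ with $U$ unitary and $\Lambda = \mathrm{diag}(\lambda_1,\dots,\lambda_n)$, where $\lambda_1,\dots,\lambda_n$ are the eigenvalues of $A$. Since $\hat\lambda$ is an eigenvalue of $A+E$, the matrix $(A+E)-\hat\lambda I$ is singular, and conjugating by $U$ preserves singularity; thus
\[
\Lambda - \hat\lambda I + U^*EU
\]
is singular as well.

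Next, I would argue by contradiction: assume $|\hat\lambda - \lambda_i| > \vertiii{E}_2$ for every $i$. Then $\Lambda - \hat\lambda I$ is invertible, so we can factor
\[
\Lambda - \hat\lambda I + U^*EU = (\Lambda - \hat\lambda I)\bigl(I + (\Lambda - \hat\lambda I)^{-1} U^*EU\bigr).
\]
Invertibility of the left side is equivalent to invertibility of the right factor, which I would establish by a Neumann-series estimate. The spectral (operator) norm of $(\Lambda - \hat\lambda I)^{-1}$ equals $1/\min_i|\hat\lambda - \lambda_i|$, and the spectral norm of $U^*EU$ equals the spectral norm $\|E\|_{\mathrm{op}}$ of $E$, which satisfies $\|E\|_{\mathrm{op}}\le \vertiii{E}_2=(\mathrm{tr}\,E^*E)^{1/2}$ since the largest singular value is bounded by the root-sum-of-squares of all singular values. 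Therefore the spectral norm of $(\Lambda-\hat\lambda I)^{-1}U^*EU$ is strictly less than $1$, which makes $I+(\Lambda-\hat\lambda I)^{-1}U^*EU$ invertible and contradicts the singularity observed above.

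The conclusion is that $\min_i|\hat\lambda - \lambda_i|\le \vertiii{E}_2$, which is exactly the statement of the lemma. The only mildly delicate point — and what I would flag as the main step to get right — is the distinction between the spectral norm and the Frobenius-type norm $\vertiii{\cdot}_2=(\mathrm{tr}\,E^*E)^{1/2}$ used in the hypothesis: the contradiction is actually sharp for the spectral norm, and the Frobenius bound follows for free from the inequality $\|E\|_{\mathrm{op}}\le\vertiii{E}_2$, together with the unitary invariance of both norms that was used to pass from $U^*EU$ back to $E$. No further structure of $E$ (normality, etc.) is needed, only normality of $A$, which is what allows the unitary diagonalization $A=U\Lambda U^*$ in the first place.
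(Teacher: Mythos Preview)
Your argument is correct: this is the standard Bauer--Fike resolvent proof for normal matrices, and the final inequality $\|E\|_{\mathrm{op}}\le (\mathrm{tr}\,E^*E)^{1/2}$ is exactly what bridges the sharp spectral-norm bound to the Frobenius-type norm stated in the lemma. The paper itself does not give a proof of this lemma at all; it simply quotes the result from Horn and Johnson, \emph{Matrix Analysis}, so there is nothing to compare against beyond noting that your argument is essentially the one found in that reference.
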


\begin{lemma}\label{Eigenvalue}
Let $A= \varepsilon_0 A_0 + \cdots + \varepsilon_m A_m$ $\in$ $M_{n,n},$ where $0< \varepsilon_k \ll1$, $A_k = (a_{ij}^k)_{n\times n}$, $0\leq k\leq m$ . Denote the eigenvalue of $AA^*$ by $\lambda_1\leq \cdots\leq \lambda_n$. Then $\lambda_1 > c \min\limits_{1\leq j \leq n} \{\varepsilon_j\}$, where $c$ is positive and depends on $a_{ij}^k$, $1\leq i,j \leq n$, $0\leq k\leq m$.
\end{lemma}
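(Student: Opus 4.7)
The plan is to lower-bound $\lambda_1(AA^*)$ by reducing to a single-scale comparison matrix and invoking Lemma \ref{LMU}. The key observation is that $AA^*$ is Hermitian, hence normal, so Lemma \ref{LMU} yields a Frobenius-norm perturbation estimate for its eigenvalues.

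First I would rescale out the smallest parameter. Let $\tilde{\varepsilon}=\min_k \varepsilon_k$ and write $A=\tilde{\varepsilon}\,B$ with $B=\sum_k (\varepsilon_k/\tilde{\varepsilon})A_k$, so that $\lambda_1(AA^*)=\tilde{\varepsilon}^2\lambda_1(BB^*)$. Since $\varepsilon_k/\tilde{\varepsilon}\geq 1$, the coefficients of $B$ are all at least $1$, and the claim is reduced to showing $\lambda_1(BB^*)\geq c'$ for a constant $c'$ that depends only on the entries $a_{ij}^k$. The asserted bound then follows, using $\tilde{\varepsilon}\ll 1$, from the stronger estimate $\lambda_1(AA^*)\geq c'\tilde{\varepsilon}^2$.

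Second, to bound $\lambda_1(BB^*)$ from below I would isolate the summand corresponding to a minimizing index $j^*$ (so that its coefficient in $B$ equals $1$), writing
\[
BB^*=A_{j^*}A_{j^*}^* + R,
\]
where $R$ collects the cross and higher-order terms $(\varepsilon_k/\tilde{\varepsilon})(\varepsilon_l/\tilde{\varepsilon})A_kA_l^*$ with $(k,l)\neq(j^*,j^*)$. Since $BB^*$ is Hermitian, Lemma \ref{LMU} guarantees that every eigenvalue of $BB^*$ lies within $|||R|||_2=(\operatorname{tr} R^*R)^{1/2}$ of some eigenvalue of $A_{j^*}A_{j^*}^*$. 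If $A_{j^*}$ is invertible then $\lambda_1(A_{j^*}A_{j^*}^*)\geq c_0>0$ with $c_0$ depending only on the entries $a_{ij}^{j^*}$, and the perturbation bound gives $\lambda_1(BB^*)\geq c_0-|||R|||_2$.

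When $A_{j^*}$ is singular or the residual $|||R|||_2$ is not yet small enough relative to $c_0$, I would iterate: absorb the next smallest scale into the reference matrix and reapply Lemma \ref{LMU}. This terminates in at most $m+1$ steps, since the full sum $A$ is assumed nontrivial so that eventually the reference matrix becomes invertible. At each step, the tracked constant depends polynomially on the fixed entries $a_{ij}^k$. The main obstacle I anticipate is controlling the Frobenius norm $|||R|||_2$, which contains cross contributions with the potentially large prefactors $\varepsilon_k/\tilde{\varepsilon}\geq 1$: the delicate point is balancing these cross terms against the dominant eigenvalue at each stage, and ensuring that the iterative peeling of scales keeps the accumulated perturbation strictly below the current main eigenvalue. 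Once this balance is secured, the final constant $c$ in the statement emerges as a polynomial expression in the $a_{ij}^k$, independent of the particular values of the $\varepsilon_k$.
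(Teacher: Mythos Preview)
Your rescaling step introduces a genuine difficulty that you flag but do not resolve. After writing $B=\sum_k(\varepsilon_k/\tilde{\varepsilon})A_k$, the coefficients $\varepsilon_k/\tilde{\varepsilon}\geq 1$ are \emph{unbounded above}, since the hypothesis places no constraint on the relative sizes of the different $\varepsilon_k$. The residual $R$ therefore contains cross terms with prefactors $(\varepsilon_k/\tilde{\varepsilon})(\varepsilon_l/\tilde{\varepsilon})$ that can dwarf any fixed $c_0$ depending only on the entries $a_{ij}^k$. Your iterative peeling does not cure this: absorbing one more scale into the reference matrix still leaves cross terms carrying the same unbounded ratios at the next stage, and there is no smallness parameter being exhausted across the $m+1$ steps. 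Since the claimed constant $c$ must be independent of the $\varepsilon_k$, these ratios cannot be absorbed into it, and the argument as written does not close.

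The paper proceeds differently. Rather than rescaling by the minimal $\tilde{\varepsilon}$ and comparing to $A_{j^*}A_{j^*}^*$, it takes as reference the scalar matrix $\hat{A}=(\max_j\varepsilon_j)^2 I_n$, which is trivially normal, and applies Lemma~\ref{LMU} to the perturbation $P_\varepsilon=AA^*-\hat{A}$, obtaining $|\lambda_1-(\max_j\varepsilon_j)^2|\leq|||P_\varepsilon|||_2$. Every entry of $AA^*$ is a combination $\sum_{k}\sum_{j_1,j_2}\varepsilon_{j_1}\varepsilon_{j_2}a^{j_1}_{ik}a^{j_2}_{kj}$, and the subtracted diagonal $(\max_j\varepsilon_j)^2$ sits at the same scale, so $|||P_\varepsilon|||_2$ is controlled by a constant in the $a_{ij}^k$ times $(\max_j\varepsilon_j)^2$; the paper then argues that the gap $(\max_j\varepsilon_j)^2-|||P_\varepsilon|||_2$ is bounded below by $c\min_j\varepsilon_j^2$, which yields the conclusion. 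The essential difference from your approach is the choice of reference matrix: the paper anchors to the \emph{largest} scale via a scalar multiple of the identity, so that no unbounded ratios $\varepsilon_k/\tilde{\varepsilon}$ ever appear, whereas your decomposition anchors to the \emph{smallest} scale and is thereby forced to confront them.
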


\begin{proof}
Denote $A^*$ the conjugate transpose of $A$.
Directly, we have
\begin{eqnarray*}
AA^* &=& (\varepsilon_0 A_0 + \cdots + \varepsilon_m A_m) (\varepsilon_0 A_0^* + \cdots + \varepsilon_m A_m^*) \\
&=& \sum\limits_{j_1=0}^m \varepsilon_{j_1} A_{j_1}\sum\limits_{j_2 =0}^m \varepsilon_{j_2} A_{j_2}^*\\
&=&\sum\limits_{j_1=0}^m \sum\limits_{j_2=0}^m \varepsilon_{j_1}\varepsilon_{j_2} A_{j_1} A_{j_2}^*\\
&=& (\sum\limits_{k=1}^n \sum\limits_{j_1=0}^m \sum\limits_{j_2=0}^m \varepsilon_{j_1}\varepsilon_{j_2} a_{ik}^{j_1} a_{kj}^{j_2})_{n\times n}.
\end{eqnarray*}
Let $\hat{A} = \big(\max\limits_{1\leq j\leq m} \{\varepsilon_j\}\big)^2 I_n$ and $P_{\varepsilon} = AA^*- \hat{A}.$ Denote the eigenvalue of $AA^*$ by $\breve{\lambda}_1\leq \cdots\leq \breve{\lambda}_n.$ Obviously, $\hat{A}^* \hat{A} = \hat{A} \hat{A}^*$, i.e., $\hat{A}$ is normal.  According to Lemma \ref{LMU}, we have
\begin{eqnarray}\label{EQM1}
|\breve{\lambda}_1 -  \max\limits_{1\leq j\leq m} \{\varepsilon_j^2\}| \leq |||P_{\varepsilon}|||_2,
\end{eqnarray}
where $|||P_{\varepsilon}|||_2 = (tr P_{\varepsilon}^* P_{\varepsilon} )^{\frac{1}{2}}$. Denote $b_{ij}=\sum\limits_{k=1}^n \sum\limits_{j_1=0}^m \sum\limits_{j_2=0}^m \varepsilon_{j_1}\varepsilon_{j_2} a_{ik}^{j_1} a_{kj}^{j_2}$. Then $AA^* = (b_{ij})_{n\times n}.$ Let $P_{\varepsilon} = (c_{ij})_{n\times n}$, where $c_{ij}= b_{ij}$, $i\neq j$, $c_{ij} = b_{ij} - \big(\max\limits_{1\leq j\leq m} \{\varepsilon_j\}\big)^2$. Thus
\begin{eqnarray*}
|||P_{\varepsilon}|||_2 &=& (tr P_{\varepsilon}^* P_{\varepsilon} )^{\frac{1}{2}}\\
&=& ( \sum\limits_{j=1}^n\sum\limits_{i=1}^n \bar{c}_{ij} c_{ij} )^{\frac{1}{2}},
\end{eqnarray*}
which implies $\min\limits_{j}\{\varepsilon_j^2\}\leq |||P_{\varepsilon}|||_2 \leq \max\limits_{j}\{\varepsilon_j^2\}.$ Hence,
\begin{eqnarray*}
\breve{\lambda}_1 &\geq& \max\limits_{j}\{\varepsilon_j^2\} -|||P_\varepsilon|||_2\\
&\geq& \max\limits_{j}\{\varepsilon_j^2\} - ( \sum\limits_{j=1}^n\sum\limits_{i=1}^n \bar{c}_{ij} c_{ij} )^{\frac{1}{2}}\\
&\geq& c \min\limits_{j}\{\varepsilon_j^2\},
\end{eqnarray*}
where $c$ is positive and depends on $a_{ij}^k$, $1\leq i,j \leq n$, $0\leq k\leq m$.
\end{proof}
\end{appendices}

\begin{thebibliography}{unsrt}
\bibitem{Arnold1} V. I. Arnold, V. V. Kozlov and A. I. Neishtadt, Mathematical aspects of classical and celestial mechanics, Springer-Verlag, Berlin, 2006.
\bibitem{Arnold4}V. I. Arnold,  Small denominators and problems of stability of motion in classical and celestial mechanics, Uspehi Mat. Nauk 18(1963), 91 - 192.
\bibitem{Cors1}J. M. Cors and G. R. Hall, Coorbital periodic orbits in the three body problem, SIAM J. Appl. Dyn. Syst. 2 (2003), 219 - 237.
\bibitem{Cors}J. M. Cors, J. F. Palaci$\acute{a}$n, P. Yanguas, On co-orbital quasi-periodic motion in the three-body problem, SIAM J. Appl. Dyn. Syst. 18(2019), 334 - 353.
\bibitem{Meyer3}  K. R. Meyer, J. F. Palaci$\acute{a}$n and P. Yanguas, Geometric averaging of Hamiltonian systems: periodic solutions, stability, and KAM tori, SIAM J. Appl. Dyn. Syst. 10(2011), 817 - 856.
\bibitem{Fejoz}J. F\'{e}joz, M. Gu\`{a}rdia, V. Kaloshin, P. Rold\'{a}n, Kirkwood gaps and diffusion along mean motion resonances in the restricted planar three-body problem, J. Eur. Math. Soc. 18 (2016) 2315 - 2403.
\bibitem{Xuemei}X. M. Li and Z. J. Shang, On the existence of invariant tori in non-conservative dynamical systems with degeneracy and finite differentiability, Discrete Contin. Dyn. Syst.   39 (2019), 4225 - 4257.
\bibitem{Meyer4} K. R. Meyer, J. F. Palaci$\acute{a}$n and P. Yanguas, Invariant tori in the lunar problem. Publ. Mat. 58(2014), 353 - 394.
\bibitem{Meyer5}K. R. Meyer, J. F. Palaci\'{a}n, P. Yanguas, Normalization through invariants in n-dimensional Kepler problems. Regul. Chaotic Dyn. 23 (2018), 389 - 417.
\bibitem{Palacian} J. F. Palaci\'{a}n, C. Vidal, J. Vidarte, P. Yanguas, Dynamics in the charged restricted circular three-body problem. J. Dynam. Differential Equations 30(2018), 1757 - 1774.
\bibitem{han} Y. C. Han, Y. Li and Y. Yi, Invariant tori in Hamiltonian systems with high order proper degeneracy, Ann. Henri Poincar$\acute{e}$ 10(2010), 1419 - 1436.
\bibitem{Horn}R.  Horn and C. Johnson, Matrix analysis, Cambridge University Press, New York, 2013.
\bibitem{Qian}W. C. Qian, Y. Li and X. Yang, Multiscale KAM theorem for Hamiltonian systems, J. Differential Equations 266 (2019), 70 - 86.
\bibitem{Qian1}W. C. Qian, Y. Li and X. Yang, Persistence of Lagrange invariant tori at tangent degeneracy, J. Differential Equations 268 (2020), 5078 - 5112.
\bibitem{Qian2}W. C. Qian, Y, Li and X. Yang, Persistence of Lagrange invariant tori at tangent degeneracy. J. Differ. Equ. 268 (2020), 5078 - 5112.
\bibitem{Qian3}  W. C. Qian, Y, Li and X. Yang, Quasiperiodic Poincar\'{e} persistence at high degeneracy. Adv. Math. 436(2024), 109399, 48 pp.
\bibitem{Treschev}D. Treschev and O. Zubelevich, Introduction to the perturbation theory of Hamiltonian systems, Springer-Verlag, Berlin, 2010.
\bibitem{Xu2}L. Xu, Y. Li and Y. Yi, Lower-dimensional tori in multi-scale, nearly integrable Hamiltonian systems,  Ann. Henri Poincar$\acute{e}$ 18(2017),  53 - 83.
\bibitem{Xu3} L. Xu, Y. Li and Y. Yi, Poincar$\acute{e}$-Treshchev mechanism in multiscale, nearly integrable Hamiltonian systems, J. Nonlinear Sci. 28(2018), 337 - 369.
\bibitem{Zhao} X. F. Zhao and Y. Li, A Moser theorem for multiscale mappings, Discrete Contin. Dyn. Syst. 42 (2022), 3931 - 3951.
\end{thebibliography}
\end{document}